\documentclass{article}
\usepackage[final]{nips_2016}
%
%

\usepackage{times}
\usepackage{graphicx} 
\usepackage{subfigure} 

\usepackage{natbib}

\usepackage{algorithm}
 \usepackage{algorithmic}


\usepackage{amsmath,amssymb}
\usepackage{footnote}
\makesavenoteenv{tabular}
\makesavenoteenv{table}
\usepackage{mathrsfs}
\usepackage{xcolor}
\usepackage{enumerate}
\usepackage{enumitem}

\usepackage[utf8]{inputenc} 
\usepackage[T1]{fontenc}    
\usepackage{url}            
\usepackage{booktabs}       
\usepackage{amsfonts}       
\usepackage{nicefrac}       
\usepackage{microtype}      
\usepackage{undertilde}

\newtheorem{remark}{Remark}

\newtheorem{theorem}{Theorem}
\newtheorem{lemma}{Lemma}

\newtheorem{proposition}[theorem]{Proposition}

\def \endprf{\hfill {\vrule height6pt width6pt depth0pt}\medskip}
\newenvironment{proof}{\noindent {\bf Proof} }{\endprf\par}

\newcommand{\OurMethod}{HONES~}
\newcommand{\com}[2]{\left(\begin{array}{c}#1 \\ #2\end{array}\right)}
\newcommand{\lb}{\left(}
\newcommand{\rb}{\right)}
\newcommand{\eps}{\epsilon}

\newcommand{\td}{\tilde}

\newcommand{\R}{\mathbb{R}}

\newcommand{\utd}[1]{\utilde{#1}}
\newcommand{\g}{g}
\newcommand{\gs}{\g_{S}}
\newcommand{\gc}{\g_{S^{c}}}
\newcommand{\y}{y}
\newcommand{\ys}{\y_{S}}
\newcommand{\yc}{\y_{S^{c}}}
\newcommand{\one}{\textbf{1}}
\newcommand{\ones}{\one_{S}}
\newcommand{\onec}{\one_{S^{c}}}
\newcommand{\A}{A}
\newcommand{\Ass}{\A_{SS}}
\newcommand{\Asc}{\A_{SS^{c}}}
\newcommand{\Acs}{\A_{S^{c}S}}
\newcommand{\Acc}{\A_{S^{c}S^{c}}}
\newcommand{\x}{x}
\newcommand{\xs}{x_{S}}
\newcommand{\xc}{x_{S^{c}}}
\newcommand{\mus}{\mu_{S}}
\newcommand{\muc}{\mu_{S^{c}}}
\newcommand{\mat}[4]{\lb
  \begin{array}{cc}
    #1 & #2\\
    #3 & #4
  \end{array}
\rb}
\renewcommand{\c}{r}
\renewcommand{\S}{S}
\newcommand{\cs}{\c_{\S}}
\newcommand{\cc}{\c_{\S^{c}}}
\newcommand{\teta}{\td{\eta}}
\newcommand{\lam}{\lambda}
\newcommand{\flam}{\alpha(\lambda)}
\newcommand{\D}{D}
\newcommand{\Dg}{D_{g}}
\newcommand{\Dgg}{D_{gg}}
\newcommand{\Dgy}{D_{g\c}}

\newcommand{\M}{M}
\newcommand{\Mc}{\M_{\bigcdot,\S^{c}}}
\newcommand{\Ms}{\M_{\bigcdot,\S}}
\newcommand{\Mss}{\M_{\S\S}}

\newcommand{\Mcs}{\M_{\S^{c}\S}}

\newcommand{\etas}{\eta_{\S}}
\newcommand{\etac}{\eta_{\S^{c}}}
\newcommand{\tetas}{\teta_{\S}}
\newcommand{\tetac}{\teta_{\S^{c}}}
\newcommand{\use}[1]{&&[\mbox{\emph{Use }}#1]}
\newcommand{\Par}{\mathrm{Par}}
\renewcommand{\v}{v}
\newcommand{\T}{\td{\S}}
\newcommand{\tAjj}{\td{\A}_{jj}}
\newcommand{\Ajj}{\A_{jj}}
\newcommand{\Asj}{\A_{\S j}}
\newcommand{\Ajs}{\A_{j\S}}

\newcommand{\Atj}{\A_{\T j}}
\newcommand{\Ajt}{\A_{j\T}}

\newcommand{\Mt}{\M_{\bigcdot, \T}}
\newcommand{\MC}{\M_{\bigcdot, \T^{c}}}
\newcommand{\Mjs}{\M_{j\S}}

\newcommand{\Mtt}{\M_{\T\T}}
\newcommand{\Mtj}{\M_{\T j}}
\newcommand{\Mjt}{\M_{j\T}}
\newcommand{\MCt}{\M_{\T^{c}\T}}

\newcommand{\MCs}{\M_{\T^{c}\S}}

\newcommand{\MCj}{\M_{\T^{c}j}}

\newcommand{\Mjj}{\M_{jj}}
\newcommand{\Mct}{\M_{\S^{c}\T}}
\newcommand{\Mcj}{\M_{\S^{c}j}}

\newcommand{\Att}{\A_{\T\T}}
\newcommand{\ACj}{\A_{\T^{c}j}}

\newcommand{\Acj}{\A_{\S^{c}j}}
\newcommand{\ACt}{\A_{\T^{c}\T}}

\newcommand{\Act}{\A_{\S^{c}\T}}
\newcommand{\ACs}{\A_{\T^{c}\S}}

\newcommand{\rcom}[2]{(#1 \quad #2)}
\newcommand{\opr}{\mathcal{R}_{j}}

\newcommand{\gC}{\g_{\T^{c}}}
\newcommand{\gt}{\g_{\T}}

\newcommand{\onet}{\one_{\T}}

\newcommand{\ulam}{\utd{\lambda}}
\newcommand{\h}{h}

\renewcommand{\l}{\ell}
\newcommand{\ls}{\l_{\S}}
\newcommand{\lc}{\l_{\S^{c}}}
\newcommand{\lt}{\l_{\T}}

\newcommand{\xis}{\xi_{\S}}
\newcommand{\xic}{\xi_{\S^{c}}}
\newcommand{\xit}{\xi_{\T}}

\newcommand{\del}{\delta}

\newcommand{\Dh}{\D_{h}}
\newcommand{\Dl}{\D_{\l}}

\newcommand*{\bigcdot}{\raisebox{-0.25ex}{\scalebox{1.2}{$\cdot$}}}
\DeclareMathOperator*{\supp}{supp}


\title{HONES: A Fast and Tuning-free Homotopy Method For Online Newton Step}

\author{
  Yuting Ye \\
  Division of Biostatistics \\
  Univ. of California, Berkeley \\
  \texttt{yeyt@berkeley.edu} \\
  \And
  Cheng Ju \\
  Division of Biostatistics\\
  Univ. of California, Berkeley\\
  \texttt{cju@berkeley.edu} \\
  \And
  Lihua Lei \\
  Department of Statistics \\
  Univ. of California, Berkeley \\
  \texttt{lihua.lei@berkeley.edu} \\
}

\begin{document}
\maketitle
%

%

\begin{abstract}
In this article, we develop and analyze a homotopy continuation method, referred to as \OurMethod, for solving the sequential generalized projections in Online Newton Step \citep{hazan2006logarithmic}, as well as the generalized problem known as \emph{sequential standard quadratic programming}. \OurMethod is fast, tuning-free, error-free (up to machine error) and adaptive to the solution sparsity. This is confirmed by both careful theoretical analysis and extensive experiments on both synthetic and real data.
\end{abstract}

\section{Introduction}\label{sec:intro}
Online convex optimization (OCO) is an appealing framework that unifies online and sequential optimization problems in various areas. In OCO, a player sequentially makes decisions by choosing a point in a convex set and a concave payoff function is revealed after each decision. The player aims to ``maximize'' her cumulative payoff, or formally minimize the \emph{regret}, which measures the gap between the average payoff of her decision strategy and that of the best fixed-action strategy from hindsight. One of the high-profile motivation is the universal portfolio management problem \citep{cover1991universal}, where an investor seeks an online strategy to allocate her wealth on a set of financial instruments without making any assumption on the market behaviors. The payoff can be quantified by \emph{logarithmic wealth growth ratio}, formulated as $\sum_{t=1}^{T}\log (x_{t}^{T}\gamma_{t})$ where $x_{t}(j) \,\,(j = 1,\ldots, n)$ is the share of the $j$-th stock in the portfolio and $\gamma_{t}(j)$ is the ratio of the closing price of stock $j$ on time $t$ to that on time $t - 1$. In view of the prohibition of short sales in most markets, the decision space is thus the $n$-dimensional simplex $\Delta_{n} = \left\{x\in \R^{n}: \sum_{i=1}^{n}x_{i} = 1, x_{i}\ge 0\right\}$. The regret of a given strategy that outputs $\{x_{t}\}_{t=1}^{T}$ is then
\begin{equation}
  \label{eq:regret}
  \sup_{x\in \Delta_{n}}\sum_{t=1}^{T}\log (x^{T}\gamma_{t}) - \sum_{t=1}^{T}\log (x_{t}^{T}\gamma_{t}).
\end{equation}

A rich class of algorithms has been developed since \cite{cover1991universal} which proposed an algorithm with regret $O(\sqrt{n\log T})$ but with exponential computation cost per period. \cite{helmbold98} developed an algorithm that reduces the computation cost to $O(n)$ but incurs a sub-optimal regret $O(\sqrt{T\log n})$ in terms of horizon dependence. Later \cite{kalai02} gave an polynomial-time algorithm that achieves the $O(\sqrt{n}\log T)$ regret, though the order of polynomials is still high. In 2003, the pioneering work by \cite{zinkevich03} proposed the influential \emph{Online Gradient Method} which achieves $O(\sqrt{T})$ regret for general OCO problems. The next milestone, among others, is achieved by \cite{hazan2006logarithmic}, which proposed \emph{Online Newton Step} that achieves $O(\log T)$ regret under mild conditions, satisfied in universal portfolio management problems, with a practical computation cost per period. \cite{hazan2006logarithmic} also shows that Online Gradient Method is able to achieve $O(\log T)$ regret but requires the loss functions to be strongly convex and, more stringently, the player knowing the strong-convexity modulus apriori. We refer the readers to \cite{shalev2011online} for the history and to Elad Hazan's thesis \citep{hazan06} for detailed description of Online Newton Step.

Despite the promising theoretical guarantee of Online Newton Step, the computation efficiency remains a considerable concern for practitioners as the algorithm involves solving a sequence of \emph{generalized projections}. Specifically, at time $t$ one needs to solve
\begin{align}
\min \frac{1}{2}x^{T}\A^{(t)} x - (\c^{(t)})^{T}x, \quad \ensuremath{s.t.} \quad x\in \Delta_{n}\label{eq:stqp}
\end{align}
where $A^{(t)}$ (resp. $\c^{(t)}$) is a sequence of matrices (resp. vectors) such that
\begin{equation}\label{eq:update}
A^{(t + 1)} = A^{(t)} + g^{(t)} (g^{(t)})^{T},
\end{equation}
for some time-varying vectors $g^{(t)}$. In the special case $A = I$, \eqref{eq:stqp} can be solved quite efficiently in $O(n)$ time \citep{duchi2008efficient} due to the explicit form of the solution. Unfortunately, in Online Newton Step, $A^{(t)}$ is never a scaled identity matrix and such benefits disappear for general matrices. \cite{hazan2006logarithmic} suggests using iterative algorithms such as interior-point method \citep{wright1997primal}. However, it is known that interior-point method has $O(n^{3})$ computation cost per iteration, which could be prohibitive for large problems or high-frequency online problems. For this reason, the sub-problem \eqref{eq:stqp} becomes the bottleneck of Online Newton Step, which motivates our work. 

Interestingly, \eqref{eq:stqp} is also the generic problem in other areas such as Markowitz's portfolio management \citep{markowitz1952portfolio} and resource allocation \citep{ibaraki88}. This is referred to as \emph{standard quadratic optimization} dating back to 1950s; See \cite{bomze98}.

Without the rank-one update structure \eqref{eq:update}, we should not expect significant improvement over interior-point method as \eqref{eq:stqp} leads to multiple unrelated quadratic programming problems. Nevertheless, \ref{eq:update} is a ``huge bonus'' that connects the consecutive problems: In fact $A^{(t)}$ is perturbed in only one direction at each step and hence the optimal solutions in consecutive steps should be close. A widely used strategy to exploit the minor change is warm-start, i.e. initializing the iterate as the optimal solution in the last step. Spectral projected gradient (SPG) method \citep{birgin2000nonmonotone} is a typical algorithm falling into this category, which combines projected gradient method with smart line search. However, a warm-start is not always allowed. For example, the interior-point method \citep{wright1997primal} requires the initializer to be an interior point of the constraint set, but as shown in various settings and applications, including our experiments in section \ref{sec:experiments}, the solution in each step often lies on the boundary of the simplex. Another potential algorithm is Exponentiated Gradient Descent \citep{kivinen97} or Mirror Descent \citep{beck03}. However, it also requires the initializer to be an interior point in that any zero entry will stay zero. Furthermore, it is lack of an efficient stopping rule, which might not be essential for solving a single problem but is quite important for solving thousands of problems.

On the other hand, it has been proved that the minimizer of a standard quadratic programming problem tends to be sparse under fairly general structural assumptions \citep{chen13, chen2015new}. We also observed the sparsity in both synthetic and real datasets; see section \ref{sec:experiments} for details. However, none of existing algorithm takes the solution sparsity into account. \footnote{We hope the readers not be confused by the word "sparsity", which usually appears as an ``assumption'' on underlying parameters in literature. Here it is a ``phenomenon'', observed in both theory and practice, that the solution of the optimization problem tends to be sparse.}

In summary, to the best of our knowledge, existing methods are neither tailored for the sequential problem with structure \eqref{eq:update} nor designed to adapt to the solution sparsity. To exploit the structure \eqref{eq:update}, we resort to the homotopy method, which is proposed decades ago and widely used in optimizing highly non-convex problems such as polynomial systems \citep{chow79, li83}. The basic idea is to construct a bivariate function $H(x, w)$ on $\R^{n}\times [0, 1]$ with $H(x, 0) = g(x)$ and $H(x, 1) = f(x)$. In order to optimize $f(x)$ one can start from the optimizer of $g(x)$ and move towards $f(x)$ by gradually increasing $w$. Given sufficient smoothness of $H$ along with the non-singularity of the Hessian matrix of $H$ w.r.t $x$, one can obtain a smooth trajectory, or a solution path , penetrating the optimizers of $H(x, w)$ for all $w\in [0, 1]$ with the optimizer of $f(x)$ being the ending point. Homotopy methods for quadratic programming problems have been studied and applied for decades \citep{frank56, banknon, ritter1981parametric, murty1988linear, best1996algorithm, efron04}. However, all these methods are designed for a specific problem. Recently, the homotopy methods have been applied to sequential problems. For example, \citet{garrigues09} proposed a homotopy method to solve the online LASSO regression problem where the objectives are updated in a similar fashion as \eqref{eq:update}.

For our problem \eqref{eq:stqp}, we define the homotopy function in a zigzag fashion which moves $(\A^{(t-1)}, \c^{(t-1)})$ to $(\A^{(t)}, \c^{(t-1)})$ and to $(\A^{(t)}, \c^{(t)})$ then; See Section \ref{subsubsec:construct} for the explicit construction. We show that the solution path can be calculated efficiently and \emph{exactly} using the Karush-Kuhn-Tucker (KKT) conditions. By carefully analyzing the evolution of solutions, we propose an algorithm, referred to as \emph{Homotopy Online NEwton Step (HONES)}, which is \emph{fast}, \emph{tuning-free}, \emph{error-free} (up to machine error) and \emph{adaptive to solution sparsity}.

We compute the number of atomic operations exactly, up to an additive constant, in Theorem \ref{thm:complexity}. As almost all other homotopy continuation methods, the theoretical complexity of our algorithm is in general incomparable to other iterative algorithms like SPG and interior-point method because the former is proportional to the number of turning points on the trajectory (see Section \ref{subsec:turningpoints}) while the latter is proportional to the number of iterations to achieve an accurate solution (see Section \ref{subsec:complexity}). For this reason, we compare the algorithms by the running time on both synthetic and real datasets. To conclude, \OurMethod has a superior performance to SPG and interior-point method and the gain of computational efficiency of \OurMethod is more significant when the solutions are sparser.

The rest of the paper is organized as follows: \OurMethod algorithm is detailed in Section \ref{sec:algo}, followed by the theory and complexity analysis. The practical implementation is more delicate than the general idea and hence stated in the Supplementary Material. In Section \ref{sec:experiments}, we apply \OurMethod to NYSE and NASDAQ data using Online Newton Step for universal portfolio management. We also conduct experiments on synthetic data and for Markowitz's portfolio management on real data. Section \ref{sec:conclusion} concludes the article.

\section{Proposed Algorithm}\label{sec:algo}
A generic framework to solve problem \eqref{eq:stqp} with matrix flow \eqref{eq:update} is summarized in Algorithm \ref{algo:generic} where ALGO1 and ALGO2 could be arbitrary sub-routines producing the solution of \eqref{eq:stqp} in step $0$ and the following steps.

\begin{algorithm}
\caption{Framework to solve (\ref{eq:stqp})}\label{algo:generic}
\textbf{Inputs: } Initial matrix $\A^{(0)}$, vectors $\{\g^{(t)}, \c^{(t)}, t = 1, 2, \ldots\}$.

\textbf{Procedure: }
\begin{algorithmic}[1]
  \STATE Initialize: $x^{(0)}\gets \mathrm{ALGO1}(\A^{(0)}, \c^{(0)})$;
  \FOR{$t = 1, 2, \cdots$.}
  \STATE $x^{(t)}\gets \mathrm{ALGO2}(\A^{(t - 1)}, g^{(t)}, \c^{(t)}; x^{(t - 1)})$;
  \ENDFOR
\end{algorithmic}\label{algo:outer_loop}

\textbf{Output: }$\{x^{(t)}: t = 0, 1, \ldots\}$.
\end{algorithm}

In this article, we will focus on the online part, namely ALGO2. The complexity of ALGO1 will be increasingly less important as $t$ increases. ALGO1 can be simply chosen as any state-of-the-art algorithm such as the interior-point method. Note that in Online Newton Step \citep{hazan2006logarithmic}, $A^{(0)} = \eps I$ is a scaled identity matrix and hence $x^{(0)} = \frac{1}{n}\textbf{1}$. 

\subsection{KKT Condition Within A Step}
We first consider the problem for a given $t$. The aim is to minimize $\frac{1}{2}x^{T}A x - \c^{T}x$ over $\Delta_{n}$, where $A$ and $\c$ are abbreviation of $A^{(t)}$ and $\c^{(t)}$. By strong duality, it is equivalent to minimize the Lagrangian form
\begin{equation}\label{eq:lag}
L(x; \mu_{0}, \mu) = \frac{1}{2}x^{T}A x - \c^{T}x + \mu_{0}(1 - \one^{T}x) -\mu^{T}x
\end{equation}
where $\mu_{0}$ and $\mu$ are Lagrangian multipliers with constraint $\mu_{i}\ge 0$ for $i = 1, \ldots, n$. Denote $S_{x}$ by the support of vector $x$. To be concise, the subscript $x$ is suppressed in the following context. KKT condition together with Slater's condition implies that $(x, \mu_{0}, \mu)$ is the solution of (\ref{eq:lag}) if and only if
\begin{align}
& \A x - \mu_{0}\one - \mu - \c= 0; \label{eq:solution}\\
& \one^{T}x = 1;\label{eq:sumone}\\ 
& \mu_{i}x_{i} = 0, \mu_{i}\ge 0, x_{i}\ge 0, \forall i = 1,\ldots, n\label{eq:comp_slack}.
\end{align}
Here (\ref{eq:comp_slack}) is dubbed \emph{complementary slackness} condition. The definition of $S = \supp(x)$ entails that $\xc = 0$, and (\ref{eq:comp_slack}) further implies that $\mus = 0$. Then the condition (\ref{eq:solution}) can be reformulated as
\begin{align*}
&\mat{\Ass}{\Asc}{\Acs}{\Acc}\com{\xs}{0}\\ 
= &\mu_{0}\com{\ones}{\onec} + \com{0}{\muc} + \com{\cs}{\cc}
\end{align*}
By separating $\S$ and $\S^{c}$, we have the following equations for $\xs$ and $\muc$.
\begin{align}
  \xs &= \mu_{0}\Ass^{-1}\ones + \Ass^{-1}\cs; \label{eq:xs}\\
  \muc &= \Acs\xs  - \mu_{0}\onec - \cc \nonumber\\
& = -\mu_{0}(\onec - \Acs\Ass^{-1}\ones) - (\cc - \Acs\Ass^{-1}\cs). \label{eq:muc}
\end{align}
The other parameter $\mu_{0}$ can be solved from (\ref{eq:sumone}) and (\ref{eq:xs}). In fact, 
\[1 = \one^{T}x = \ones^{T}\xs = \mu_{0}\ones^{T}\Ass^{-1}\ones + \ones^{T}\Ass^{-1}\cs\]
which implies that
\begin{equation}\label{eq:muzero}
\mu_{0} = \frac{1 - \ones^{T}\Ass^{-1}\cs}{\ones^{T}\Ass^{-1}\ones}.
\end{equation}
In summary, the quadruple $(\S, \xs, \muc, \mu_{0})$ which solves (\ref{eq:xs})-(\ref{eq:muzero}) produces the unique solution of (\ref{eq:lag}). Moreover, given the correct support $\S$, we can uniquely solve the other three parameters. Thus, determining $\S$ is the key part in this problem.

\subsection{\OurMethod Algorithm}

\subsubsection{Construction of Homotopy Continuation}\label{subsubsec:construct}
Based on the above argument, the problem is reduced to updating
support $\S$ with $\A$ replaced with $\A + \g\g^{T}$, and $\c$ replaced by $\c + \l$, where $\g, \l$ are shorthand notations of $g^{(t)}$ and $\c^{(t)} - \c^{(t - 1)}$. Heuristically, $\S$ will not be significantly disturbed when $\g, \l$ are small perturbations. However, in real problems, there is usually no such constraints on $\g$. Instead, we can consider a homotopy from $(\A, \c)$ to $(\A + \g\g^{T}, \c + \l)$. The most natural one is $(\A +\lam \g\g^{T}, \c + \ulam\l)$ with $\lam, \ulam\in [0, 1]$. In other words, if we denote $x(\lam, \ulam)$ be the solution of (\ref{eq:lag}) with $(\A, \c)$ replaced by $(\A + \lambda \g\g^{T}, \c + \l)$, then $x(0, 0)$ is the solution in the last step and $x(1, 1)$ is the solution after the update. The idea of homotopy continuation method is to calculate $x(\lam, \ulam)$ over a path linking $(0, 0)$ to $(1, 1)$. Theoretically, any path suffices and the goal is to find a path which leads to a simple computation. In this article we will consider the Manhattan path from $(0, 0)$ to $(1, 1)$, namely the union of three segments: $\{(z, 0): z \in [0, 1]\}$ and $\{(1, z): z\in [0, 1]\}$. In other words we first minimize 
\[H^{(1)}(\lam)\triangleq \frac{1}{2}x^{T}(\A + \lam \g\g^{T})x - \c^{T}x\] 
for each $\lam\in [0, 1]$ and then minimize 
\[H^{(2)}(\ulam) \triangleq\frac{1}{2}x^{T}(\A + \g\g^{T})x - (\c + \ulam\l)^{T}x\] 
for each $\ulam\in [0, 1]$. 

Although the problem is augmented, the update is efficient since the support $\S$ is shown to be a piecewise constant set on the path and explicit formulas, namely (\ref{eq:xs}) - (\ref{eq:muzero}), can be used to compute $(\xs, \muc, \mu_{0})$ directly when $S$ is fixed. In fact, the triple $(\xs, \muc, \mu_{0})$ is a simple function of $(\lam, \ulam)$ as shown in the following theorem.
\begin{theorem}\label{thm:update}
~
  \begin{enumerate}
  \item   For a given $\ulam$, there exists vectors $u_{1}, u_{2}\in \R^{n + 1}$ and scalars $D_{1}, D_{2}\in \R$, which only depend on $\S$, such that
\begin{equation}\label{eq:lambda_update}
\lb\begin{array}{c}\xs(\lam) \\ -\muc(\lam) \\ \mu_{0}(\lam)\end{array}\rb = \frac{u_{1} - u_{2}\lam}{D_{1} - D_{2}\lam}.
\end{equation}
  \item For given $\lam$, there exists vectors $\utd{u}_{1}, \utd{u}_{2}\in \R^{n + 1}$, which only depend on $\S$, such that
\begin{equation}\label{eq:tlambda_update}
\lb\begin{array}{c}\xs(\ulam) \\ -\muc(\ulam) \\ \mu_{0}(\ulam)\end{array}\rb = \utd{u}_{1} - \utd{u}_{2}\ulam.
\end{equation}
  \end{enumerate}
\end{theorem}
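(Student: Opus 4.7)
The plan is to recast the KKT equations (\ref{eq:xs})--(\ref{eq:muzero}) for a \emph{fixed} support $\S$ as a single linear system in the augmented unknown vector $(\xs,\,-\muc,\,\mu_{0})$, and then exploit the fact that both perturbations of interest have rank one. The main conceptual step is that this reformulation turns the rank-one perturbation of $\A$ into a rank-one perturbation of the full KKT matrix, after which Sherman--Morrison yields the claimed closed form in one line.

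Concretely, I would first stack the three KKT identities into the block-linear system
\[
\M \begin{pmatrix} \xs \\ -\muc \\ \mu_{0} \end{pmatrix} \;=\; \begin{pmatrix} \cs \\ \cc \\ 1 \end{pmatrix},
\qquad
\M \;=\; \begin{pmatrix} \Ass & 0 & -\ones \\ \Acs & I & -\onec \\ \ones^{T} & 0 & 0 \end{pmatrix},
\]
in which only the right-hand side depends on $\c$ and only the $\Ass$ and $\Acs$ blocks depend on $\A$. Invertibility of $\M$ reduces, via a Schur-complement computation, to the positive definiteness of $\Ass$ together with $\ones^{T}\Ass^{-1}\ones > 0$, which is exactly the condition making $\mu_{0}$ in (\ref{eq:muzero}) well-defined.

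For Part 1, $\ulam$ is frozen, so the right-hand side $b = (\cs + \ulam\ls,\,\cc + \ulam\lc,\,1)^{T}$ is held fixed, while replacing $\A$ by $\A + \lam\g\g^{T}$ perturbs $\M$ to $\M + \lam \v w^{T}$ with $\v = (\gs^{T},\gc^{T},0)^{T}$ and $w = (\gs^{T},0,0)^{T}$. A single application of Sherman--Morrison gives
\[
(\M + \lam \v w^{T})^{-1} b \;=\; \M^{-1}b \;-\; \frac{\lam\,(w^{T}\M^{-1}b)}{1 + \lam\, w^{T}\M^{-1}\v}\,\M^{-1}\v,
\]
which, after clearing the common denominator, takes exactly the form $\frac{u_{1} - u_{2}\lam}{D_{1} - D_{2}\lam}$ with $u_{1} = \M^{-1}b$, $u_{2} = (w^{T}\M^{-1}b)\,\M^{-1}\v - (w^{T}\M^{-1}\v)\,\M^{-1}b$, $D_{1} = 1$, and $D_{2} = -w^{T}\M^{-1}\v$. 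None of these quantities depends on $\lam$; they depend only on $\S$, $\A$, $\c$, and the frozen $\ulam$, as required.

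For Part 2 it is now $\lam$ that is frozen, so the coefficient matrix $\M + \lam\v w^{T}$ is held fixed and only the right-hand side varies: replacing $\c$ by $\c + \ulam\l$ writes the right-hand side as $b_{1} + \ulam b_{2}$ with $b_{1} = (\cs,\cc,1)^{T}$ and $b_{2} = (\ls,\lc,0)^{T}$. Linearity of inversion immediately yields the affine form $\utd{u}_{1} - \utd{u}_{2}\ulam$ with $\utd{u}_{1} = (\M + \lam\v w^{T})^{-1}b_{1}$ and $\utd{u}_{2} = -(\M + \lam\v w^{T})^{-1}b_{2}$. The main pitfall to avoid is the tempting alternative of Sherman--Morrison-ing only $\Ass$ inside the explicit formulas (\ref{eq:xs})--(\ref{eq:muzero}): that route produces expressions of apparent degree two in $\lam$ whose cancellation back to a linear-over-linear ratio is nontrivial, whereas the one-shot application to the full KKT matrix $\M$ exposes the correct structure at once.
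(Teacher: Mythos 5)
Your proof is correct, and for Part 1 it takes a genuinely different route from the paper's. The paper proves Part 1 (as Theorem \ref{thm:var_lambda} in Appendix \ref{app:varyA}) by applying Sherman--Morrison to the principal block $\Ass + \lam\gs\gs^{T}$ alone and then propagating the perturbation through the explicit formulas (\ref{eq:xs})--(\ref{eq:muzero}) via a family of intermediate variables ($\eta$, $\teta$, $\D$, $\Dg$, $\Dgg$, $\Dgy$) introduced in Lemma \ref{lem:var_lambda}; this is exactly the ``apparent degree two in $\lam$'' route you flag as a pitfall, and the required cancellations occupy most of that appendix. Your one-shot Sherman--Morrison on the full $(n+1)\times(n+1)$ KKT block matrix exposes the linear-fractional structure immediately and is the cleaner way to prove the \emph{statement}; the two answers agree, e.g.\ your $D_{2}$ equals $(\Dg^{2}-\D\Dgg)/\D$, which is the paper's denominator normalized by $\D$. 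What the paper's longer computation buys is not the theorem but the implementation: the incremental update rules for $\M$, $\eta$, $\teta$ and the four scalars are precisely what yield the $O(ns)$ cost per turning point in Theorem \ref{thm:complexity}, and those do not fall out of your aggregated form. Two points you leave implicit do hold and are worth recording: invertibility of the KKT block matrix follows from $\Ass\succ 0$ together with $\ones^{T}\Ass^{-1}\ones>0$ via the Schur complement, exactly as you sketch; and the Sherman--Morrison denominator is $1+\lam(\Dgg-\Dg^{2}/\D)\ge 1$ for $\lam\ge 0$ by Cauchy--Schwarz applied to the inner product induced by $\Ass^{-1}$, so the formula is valid on the entire segment where $\S$ is constant. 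Your Part 2 is essentially the paper's argument (linearity of the solution in the right-hand side), packaged at the level of the block system rather than coordinate-wise through (\ref{eq:xs})--(\ref{eq:muzero}).
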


\begin{proof} 
\begin{enumerate}[leftmargin=4mm]
  \item The proof is quite involved and we relegate it into Theorem \ref{thm:var_lambda} in Appendix \ref{app:varyA}. The theorem also gives the exact formula of $u_{1}, u_{2}, D_{1}, D_{2}$.
  \item By \eqref{eq:muzero}, we have
\[\mu_{0}(\ulam) = \frac{1 - \ones^{T}\Ass^{-1}(\cs + \ulam \ls)}{\ones^{T}\Ass^{-1}\ones} = \mu_{0}(0) - \frac{\ones^{T}\Ass^{-1}\ls}{\ones^{T}\Ass^{-1}\ones}\ulam.\]
Then it follows from \eqref{eq:xs} that
\begin{align*}
&\xs(\ulam) = \mu_{0}(\ulam)\Ass^{-1}\ones + \Ass^{-1}(\cs + \ulam\ls)\\ 
= & \xs(0) - \lb \frac{\ones^{T}\Ass^{-1}\ls}{\ones^{T}\Ass^{-1}\ones}\Ass^{-1}\ones - \Ass^{-1}\ls\rb\ulam.
\end{align*}
Similarly, by \eqref{eq:muc}, we obtain that 
\[-\muc(\ulam) = -\muc(0) - \bigg( \frac{\ones^{T}\Ass^{-1}\ls}{\ones^{T}\Ass^{-1}\ones} (\onec - \Acs\Ass^{-1}\ones)\] 
\[- (\lc - \Acs\Ass^{-1}\ls)\bigg)\ulam.\]
  \end{enumerate}
\end{proof}

\subsubsection{Update of Support}
Once $\S = \S(\ulam)$ is obtained for all $\ulam$, the solution path can be efficiently solved by Theorem \ref{thm:update}. Heuristically, $\S$ is piecewise constant and the task is reduced to find the next $\ulam$ that $\S(\ulam)$ changes. We consider the update of $\S$ in optimizing $H^{(1)}(\lam)$. The update of $\S$ in optimizing $H^{(2)}(\ulam)$ can be obtained in the same way. 

For a given $\lam_{0}\in [0, 1]$, if
$\xs(\lam_{0}) > 0$ and $\muc(\lam_{0}) > 0$, then
(\ref{eq:lambda_update}) implies that there exists $\eta > 0$, such
that for any $\lam \in (\lam_{0} - \eta, \lam_{0} + \eta)$,
both $\xs(\lam)$ and $\muc(\lam)$ remains positive by setting
$\S(\lam) = \S(\lam_{0})$. Since (\ref{eq:xs})-(\ref{eq:muzero})
are sufficient and necessary, we conclude that $\S(\lam) =
\S(\lam_{0})$. This argument remains valid until an entry of either
$\xs$ or $\muc$ hits zero. Denote $j$ by the index of this entry. In
the former case, $j$ leaves $\S$ and $\S$ is updated to
$\S\setminus\{j\}$. In the latter case, $j$ enters into $\S$ and $\S$ is
updated to $\S\cup \{j\}$. The other three parameters are then updated
correspondingly by Theorem \ref{thm:update}. Theorem
\ref{thm:update_support} formalizes the above claim. The proof is omitted since it is a direct consequence of sufficiency and necessity of KKT conditions (\ref{eq:xs})-(\ref{eq:muzero}).

\begin{theorem}\label{thm:update_support}
For any given $\lam_{0} \ge 0$, let $\lam^{\mathrm{new}}$ be the
next smallest $\lam$ such that one entry of either $\xs$ or $\muc$ hits 0, i.e.
\[\lam^{\mathrm{new}} = \mathrm{min}_{+}\left\{\frac{u_{1i}}{u_{2i}}: i = 1, 2, \ldots, n\right\},\]
where $u_{1}$ and $u_{2}$ are defined in (\ref{eq:lambda_update}) and $\mathrm{min}_{+}$ evaluates the minimum positive number in the set and defined to be $\infty$ if all elements are non-positive. Then $\S(\lam) \equiv S(\lam_{0})$ for $\lam\in [\lam_{0}, \lam^{\mathrm{new}})$. Further, let 
\[I_{1} = \{i\in \S: u_{1i} = u_{2i}\lam^{\mathrm{new}}\},\]
\[I_{2} = \{i\in \S^{c}: u_{1i} = u_{2i}\lam^{\mathrm{new}}\},\]
then $\S(\lam^{\mathrm{new}})$ is updated by
\[S(\lam^{\mathrm{new}}) = (S(\lam_{0})\setminus I_{1}) \cup I_{2}.\]
\end{theorem}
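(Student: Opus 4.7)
The strategy is to leverage the equivalence, recorded in Section 2.1, between solving (\ref{eq:lag}) and exhibiting a quadruple $(S, \xs, \muc, \mu_0)$ that satisfies the algebraic relations (\ref{eq:xs})--(\ref{eq:muzero}) together with the sign requirements $\xs \ge 0$ and $\muc \ge 0$. Once such a quadruple is produced it is the unique optimum, so to identify $S(\lam)$ it suffices to construct a feasible quadruple whose support is as claimed, and then invoke uniqueness.

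For the first conclusion (constancy of support on $[\lam_0, \lam^{\mathrm{new}})$), I would take the candidate support to be $S = S(\lam_0)$ and let $(\xs(\lam), \muc(\lam), \mu_0(\lam))$ be given by the rational parametrization (\ref{eq:lambda_update}) of Theorem \ref{thm:update}. By construction, this quadruple satisfies the algebraic KKT equations (\ref{eq:xs})--(\ref{eq:muzero}) for every $\lam$; the only thing to verify is the sign conditions. Reading (\ref{eq:lambda_update}) entry by entry shows that the $i$-th coordinate of $(\xs, -\muc)$ changes sign precisely at $\lam = u_{1i}/u_{2i}$. The definition $\lam^{\mathrm{new}} = \mathrm{min}_+\{u_{1i}/u_{2i}\}$ is therefore exactly the first value of $\lam \ge \lam_0$ at which a sign condition becomes tight, so for $\lam \in [\lam_0, \lam^{\mathrm{new}})$ no sign is violated and the constructed quadruple is the genuine KKT solution.

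For the second conclusion, at $\lam = \lam^{\mathrm{new}}$ the indices $i \in I_1 \subset S(\lam_0)$ have $\xs_i(\lam^{\mathrm{new}}) = 0$ and the indices $i \in I_2 \subset S^{c}(\lam_0)$ have $\muc_i(\lam^{\mathrm{new}}) = 0$, while all other entries retain strict positivity. The conceptual crux is that at such a simultaneous double-zero an index with $x_i = 0 = \mu_i$ can be labelled as either active ($i \in S$) or inactive ($i \in S^{c}$) without affecting feasibility. To choose the labelling that will remain valid on the succeeding interval, I would move $I_1$ out of the active set (otherwise $x_i$ would become negative just past $\lam^{\mathrm{new}}$) and move $I_2$ into it (otherwise $\mu_i$ would become negative). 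Checking that the quadruple built from $S^{\mathrm{new}} = (S(\lam_0) \setminus I_1) \cup I_2$ continues to satisfy (\ref{eq:xs})--(\ref{eq:muzero}) at $\lam^{\mathrm{new}}$ is then automatic because the numerical values of $x$ and $\mu$ are unchanged at this single point.

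The main obstacle is conceptual rather than computational: one must be explicit that $S(\lam)$ in this algorithm denotes the working active set used to parametrize the KKT branch, not the literal $\supp(x(\lam))$ (which equals $S(\lam_0) \setminus I_1$ at $\lam^{\mathrm{new}}$). Under that convention the update rule is the unique labelling that keeps the rational parametrization (\ref{eq:lambda_update}) well-defined and sign-feasible on the next segment of the homotopy path, which is precisely what is needed for the algorithm to continue.
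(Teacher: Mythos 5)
Your proof is correct and follows essentially the same route as the paper: the paper omits a formal argument, stating that the theorem is a direct consequence of the sufficiency and necessity of the KKT conditions (\ref{eq:xs})--(\ref{eq:muzero}), and the informal discussion preceding the theorem is exactly the uniqueness-plus-sign-feasibility argument you spell out via the parametrization (\ref{eq:lambda_update}). Your closing remark distinguishing the working active set from the literal support at a turning point is a useful clarification that the paper leaves implicit.
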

\begin{remark}\label{rem:update_one}
  According to our experience, $I_{1}\cup I_{2}$ at most contains one element. In other words, $\S$ is updated by one element at each iteration.
\end{remark}

In summary, the algorithm starts from $\lam = 0$ and searches for the next smallest $\lam$ such that one entry of $\xs$ or $\muc$ hits zero, then updates $\lam$ as well as the quadruple $(S, \xs, \muc, \mu_{0})$. The procedure is repeated until $\lam$ crosses 1. In other words, there exist a sequence $0 = \lam_{0} < \lam_{1} < \ldots < \lam_{k} = 1$, which we call \emph{turning points}, such that $\x(\lam)$ has the same support between any two consecutive turning points and the value can be calculated by Theorem \ref{thm:update}. A counterpart of Theorem \ref{thm:update_support} can be established for $\ulam$. The whole task reduces to finding all turning points and we call this procedure \OurMethod algorithm. The complexity of \OurMethod algorithm is determined by both the number of turning points and the complexity of the update between two consecutive turning points. For compact notation, we define $v$ as a $n\times 1$ vector with 
\[v_{\S} = \xs, \quad v_{\S^{c}} = -\muc.\]
\noindent To be more clear, we state the main steps in Algorithm \ref{algo:homotopy} for optimizing $H^{(1)}(\lam)$ holding $\ulam=0$. As a convention, the minimum of an empty set is set to be infinity (line 3). The algorithm for optimizing $H^{(2)}(\ulam)$ holding $\lam=1$ can be written in the same way as Algorithm \ref{algo:homotopy} by changing $\lam$ into $\ulam$. 
\begin{algorithm}
  \caption{Main steps of \OurMethod algorithm in optimizing $H^{(1)}(\lam)$}\label{algo:homotopy}
\textbf{Inputs: } parameters $A, y, \c, g$; initial optimum $x$ (corresponding to $A$)

\textbf{Procedure: }
\begin{algorithmic}[1]
  \STATE Initialize $\lam\gets 0, \S\gets \supp(x)$;
  \WHILE{$\lam < 1$}
  \STATE $\lam = \min\{\lam_{1} > \lam: v_{i}(\lam_{1}) = 0\mbox{ for some }i\}$;
  \STATE $I_{1}\gets \{i\in \S: v_{i}(\lam) = 0\}$;
  \STATE $I_{2}\gets \{i\in \S^{c}: v_{i}(\lam) = 0\}$;
  \IF {$\lam \le 1$}
  \STATE $\S\gets (\S\setminus I_{1})\cup I_{2}$;
  \ELSE
  \STATE $\lam\gets 1$;
  \ENDIF
  \STATE $(\xs, \muc, \mu_{0})\gets (\xs(\lam), \muc(\lam), \mu_{0}(\lam))$ via (\ref{eq:xs})-(\ref{eq:muzero}).
  \ENDWHILE
\end{algorithmic}
\textbf{Output: } $(S, \xs, \muc, \mu_{0})$.
\end{algorithm}

\subsection{Implementation and Complexity Analysis}\label{subsec:complexity}
Algorithm \ref{algo:homotopy} presents the main idea without the implementation details. Although we can implement Algorithm \ref{algo:homotopy} by directly computing quantities, e.g. $u_{1}, u_{2}$, in every step to find the next turning point as in line 3 and also directly computing the iterates via (\ref{eq:xs})-(\ref{eq:muzero}) as in line 11, it is fairly inefficient since many quantities appear in several computation steps and we can store them to save the computation. A careful derivation in Appendices \ref{app:varyA} and \ref{app:varyc} shows that the computation complexity is indeed low. For example, although $u_{1}$ and $u_{2}$ involves $\Ass^{-1}$, there is \emph{no need} to calculate the matrix inverse directly. Theorem \ref{thm:complexity} summarizes the complexity for optimizing $H^{(1)}(\lam), H^{(2)}(\ulam)$ separately. As a convention, we assume the scalar-scalar multiplication takes a unit time and ignore the addition for simplicity when computing the complexity. Since the real implementation is involved, we state it as well as the proof of theorem \ref{thm:complexity} in Appendices \ref{app:varyA} and \ref{app:varyc} for two cases separately. 
\begin{theorem}\label{thm:complexity}
In step $t$, denote by $k_{\A}, k_{\c}$ the number of turning points in optimizing $H^{(1)}(\lam)$ and $H^{(2)}(\ulam)$. Further let $s$ be the maximum support size over the path of $(\lam, \ulam)$ and $s_{*}$ by the size of union of all supports from step $1$ to step $t$. Let $C_{jt}$ be the computation cost of \OurMethod algorithm in optimizing $H^{(j)}$, then
\begin{enumerate}
\item $C_{1t} = ns_{*} + ns(3k_{\A} + 1) + n(12k_{\A} + 2) + O(k_{\A})$;
\item $C_{2t} = ns(2k_{\c} + 1) + n(6k_{\c} + 1) + O(k_{\c})$.
\end{enumerate}
\end{theorem}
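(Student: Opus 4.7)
The approach is to carefully count the atomic operations required by Algorithm \ref{algo:homotopy}, separating one-time setup costs from the cost paid per turning point, and showing that by maintaining a small number of auxiliary vectors (rather than $\Ass^{-1}$ explicitly) we achieve the claimed bounds. Because the theorem asks for the constants in front of $ns$ and $n$, not just orders of magnitude, the proof is essentially a line-by-line tally against the explicit implementation.

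First, I would identify the minimal bookkeeping required between consecutive turning points: enough to (i) locate the next turning point through the $\min_{+}$ in Theorem \ref{thm:update_support}, and (ii) evaluate $(\xs,\muc,\mu_{0})$ at that turning point through (\ref{eq:xs})--(\ref{eq:muzero}). For the $\lam$-path the central quantities are the matrix-vector products $\Ass^{-1}\ones$, $\Ass^{-1}\cs$, $\Ass^{-1}\gs$, together with the off-diagonal actions $\Acs(\Ass^{-1}\ones)$, $\Acs(\Ass^{-1}\cs)$, $\Acs(\Ass^{-1}\gs)$, plus the Sherman--Morrison correction induced by $\lam\g\g^{T}$; these are exactly the ingredients appearing in the formulas for $u_1, u_2, D_1, D_2$ derived in Theorem \ref{thm:update}. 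For the $\ulam$-path the analogous list shortens to the vectors built from $\Ass^{-1}\ones$ and $\Ass^{-1}\ls$, which explains why $C_{2t}$ has strictly smaller constants.

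Next, I would argue that at each support transition $\S\to \S\cup\{j\}$ or $\S\to \S\setminus\{j\}$ every maintained vector can be refreshed by a block matrix inversion identity in time proportional to $ns$ rather than re-inverting a submatrix. The dominant operation per turning point is the product of a freshly updated vector of length $s$ against the $\S^{c}$-block of $\A$, costing $ns$; counting three such products for $H^{(1)}$ and two for $H^{(2)}$ yields the coefficients $3k_{\A}$ and $2k_{\c}$ in front of $ns$. The linear-in-$n$ contributions $12n$ (resp.\ $6n$) per turning point come from the componentwise operations used to assemble $u_1,u_2$ on $\S^{c}$, to form the ratios $u_{1i}/u_{2i}$ across all $n$ indices inside the $\min_{+}$, and to propagate the Sherman--Morrison correction to all entries of $\v$. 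The remaining $O(k_{\A})$ and $O(k_{\c})$ tally scalar bookkeeping such as $D_1,D_2$ and the inner products in (\ref{eq:muzero}).

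Finally, the additive $ns_{*}$ term in $C_{1t}$ is an amortization: every coordinate that has ever joined the support at any earlier or current step eventually contributes a column of $\A$ into the $\Acs$ products, and that column's $n$ entries must be instantiated once during the run, giving at most $ns_{*}$ cumulative loads. The main obstacle is precisely this meticulous accounting: one must verify that no auxiliary quantity is recomputed redundantly, that the Sherman--Morrison step does not inflate the constant beyond what is claimed, and that the arithmetic on the $\S$-block (size $s$) is always absorbed into the $O(k_{\A})$ or $O(k_{\c})$ term rather than scaling with $n$. Since the derivation is mechanical but lengthy, I would relegate the detailed update formulas for the $\lam$-path and $\ulam$-path and the operation count to Appendices \ref{app:varyA} and \ref{app:varyc} respectively, treating the two cases symmetrically and using Remark \ref{rem:update_one} to restrict attention to single-index transitions.
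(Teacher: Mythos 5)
Your plan follows the same route as the paper's actual proof (Appendices~\ref{app:varyA} and \ref{app:varyc}): maintain a small set of intermediate quantities ($\M$ with blocks $\Ass^{-1}$ and $-\Acs\Ass^{-1}$, the vectors $\eta,\teta,\xi$ and a few scalars), update them by Sherman--Morrison along $\lam$ and by block-inverse rank-one identities at each support change, tabulate the atomic operations of each sub-routine, and sum over the $k_{\A}^{\pm}$ and $k_{\c}^{\pm}$ turning points; this is exactly how the coefficients $3k_{\A}$, $2k_{\c}$, $12k_{\A}$, $6k_{\c}$ arise. Two details of your accounting differ from the paper's, and one of them matters. First, the three $ns$-cost operations per $\lam$-turning point are not three products against the $\S^{c}$-block of $\A$: only one is (forming $\gamma_{\T^{c}}=-\A_{\T^{c}j}-\A_{\T^{c}\S}\Mjs^{T}$ at an expansion); the other two are the outer products $\eta\etas^{T}$ (updating $\Ms$ inside UPDATE\_BY\_LAMBDA) and $\gamma\td{\gamma}_{\T}^{T}$ (updating $\Ms$ at the support change). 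This is a harmless misattribution since the totals agree, and it also explains why the $\ulam$-path drops to $2k_{\c}$: $\M$ does not depend on $\ulam$, so the per-turning-point $\Ms$ update disappears there.

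Second, and more substantively, the $ns_{*}$ term is not an amortized one-time cost of loading columns of $\A$ over the whole run. In the paper it is a \emph{per-step} cost, incurred at every $t$, of carrying out the rank-one update of the matrix itself restricted to the needed columns, $\A_{\bigcdot,\S_{*}}\gets \A_{\bigcdot,\S_{*}}+g^{(t)}(g^{(t)}_{\S_{*}})^{T}$; the naive full update $\A\gets\A+gg^{T}$ would cost $n^{2}$ and dominate everything else when the solution is sparse, which is why $\S_{*}$ (the union of all supports seen so far) is tracked at all. Under your amortized reading the term would not appear in each $C_{1t}$ as the theorem states, so this piece of the accounting needs to be restated as per-step maintenance of $\A$, with the true amortized part being only the occasional catch-up of a newly encountered column.
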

It is clear that the algorithm adapts to the sparsity when optimizing both $H^{(1)}(\lam)$ and $H^{(2)}(\ulam)$. For each step, the complexity is $\mathcal{O}\left (ns(k_A + k_r)\right )$, upper bounded by $\mathcal{O}(n^2(k_A + k_r))$ for the dense case, which is the same as other algorithms due to the inevitable multiplication of $A$ by $x$. In some special regimes, the solution is guaranteed to be sparse with high probability, e.g., the data matrix is randomly generated form a certain distribution such as uniform and exponential distributions \cite{chen13,chen2015new}. We also observe this in our experiments; See Section \ref{sec:experiments} for details.

\subsection{Number of Turning Points}\label{subsec:turningpoints}
Let $\S_{t}$ be the support of the optimum and $k_{t}$ be the number of total turning points, then we can derive a generic bound that
\begin{equation}\label{eq:lower_bound_k}
k_{t}\ge |\S_{t}\setminus \S_{t - 1}| + |\S_{t - 1}\setminus \S_{t}|
\end{equation}
provided that only one element is added to or removed from the support at each update; see Remark \ref{rem:update_one}. This is because it requires at least $|\S_{t - 1}\setminus \S_{t}|$ steps to pop out the elements in $\S_{t-1}\setminus \S_{t}$ and $|\S_{t}\setminus \S_{t - 1}|$ steps to push in the elements in $\S_{t}\setminus \S_{t-1}$ to translate $\S_{t - 1}$ into $\S_{t}$. 

On the other hand, suppose that no other coordinates than those in $\S_{t}\cup \S_{t - 1}$ enter into the support in the path, then 
\begin{equation}\label{eq:upper_bound_k}
k_{t} = |\S_{t}\setminus \S_{t - 1}| + |\S_{t - 1}\setminus \S_{t}|.
\end{equation}
Heuristically, the equation \eqref{eq:upper_bound_k} should hold since if a coordinate, not in $\S_{t}\cup \S_{t-1}$, entered into the support on the path, it must be popped out before the end, which, however, should be rare to happen. For both synthetic data and real data in section \ref{sec:experiments}, we observe that there are at least $95\%$ of steps with $k_{t}$ satisfying \eqref{eq:upper_bound_k} and over $99\%$ of steps with $k_{t}\le |\S_{t}\setminus \S_{t - 1}| + |\S_{t - 1}\setminus \S_{t}| + 6$, i.e. with at most $3$ outside coordinates entered into the path. Thus, \eqref{eq:upper_bound_k} is a highly reliable result for $k_{t}$.

As a direct consequence of \eqref{eq:upper_bound_k}, \OurMethod algorithm is efficient when the support changes slowly in which case $k_{t}$ is small. In addition, if the solution is sparse, then a rough bound suggests that $k_{t}\le |\S_{t}| + |\S_{t - 1}|$ is small. These phenomena are observed in various situations (see section \ref{sec:experiments}) and \eqref{eq:upper_bound_k} explains the good performance of \OurMethod algorithm. 

In general, the worst-case bound for the number of turning points can be exponential as \citet{mairal2012complexity,gartner2009exponential}  pointed out for Lasso and SVM respectively. But the number of turning points is usually not large in practice. The same issue appears in Simplex method for linear programming. Although it is known that the worst case complexity is $2^n$, it usually converges in $\mathcal{O}(n)$ operations;See \cite{bertsimas1997introduction}.

\section{Experiments}\label{sec:experiments}
In this section, we compare the performance of HONES with SPG and the interior-point method on both real and synthetic data. We implement \OurMethod in MATLAB \footnote{Code available at https://github.com/Elric2718/HOP.} and implement SPG \footnote{https://www.cs.ubc.ca/~schmidtm/Software/minConf.html} and interior-point method\footnote{\textsc{quadprog} function in MATLAB} by using existing code.  To make a fair comparison, SPG uses the solution to step $t$ as the warm start for the solution to step $t + 1$. To evaluate the performance, we display the cumulative running time as a measure of efficiency. All experiments are conducted on a machine with 3 GHz Intel Core i7 processor, OS X Yosemite system and Matlab 2015a. \footnote{Here we exclude the running time incurred by updating $A^{(t-1)}$ to $A^{(t)}$ since this is unrelated to the optimization and is unavoidable for whatever algorithms.}

\subsection{Universal Portfolio Management}\label{subsec:ons}
\citep{hazan2006efficient} proposed a version of Online Newton Step (Figure 3.7) that is equivalent to \eqref{eq:stqp} with 
\[A^{(0)} = I, \quad g^{(t)} = \frac{\gamma_{t}}{x_{t}^{T}\gamma_{t}}, \quad \c^{t} = \frac{1}{4}\sum_{\tau = 1}^{t}\frac{\gamma_{t}}{x_{t}^{T}\gamma_{t}}.\]
We apply our algorithm on two datasets from NYSE and NASDAQ\footnote{Data available at https://github.com/Elric2718/HOP.}, with daily stock price data from Jan. 3, 2005 to May. 13, 2016. The NYSE dataset contains $1544$ stocks and NASDAQ dataset contains $1101$ stocks. This differs from classical studies where at most hundreds of stocks, such as S\&P500, are incorporated. Still, we should emphasize that for some financial institutions like hedge funds, the number of base assets is huge and the computation efficiency becomes important when the trading frequency is high. Here we consider a large number of stocks to show the potential of \OurMethod algorithm in optimizing a large basket of assets. 

The cumulative running time, measured in seconds, is reported in Figure \ref{fig:ons_cumtime}. The interior-point method is quite inefficient as the running time for 10 steps (0.04 epochs) exceeds the total running time of \OurMethod and SPG. Thus the proposal by \cite{hazan2006logarithmic} is not desirable. In addition, \OurMethod is much more efficient than both SPG, especially in the more volatile case (NASDAQ). In fact, \OurMethod achieves a $2\times$ speedup on NYSE data and a $6\times $ speedup on NASDAQ data! By excluding the first 4 epochs, \OurMethod even achieves a $12.5\times$ speedup on NASDAQ data.

\begin{figure}
  \begin{center}
    \includegraphics[width = 0.8\textwidth]{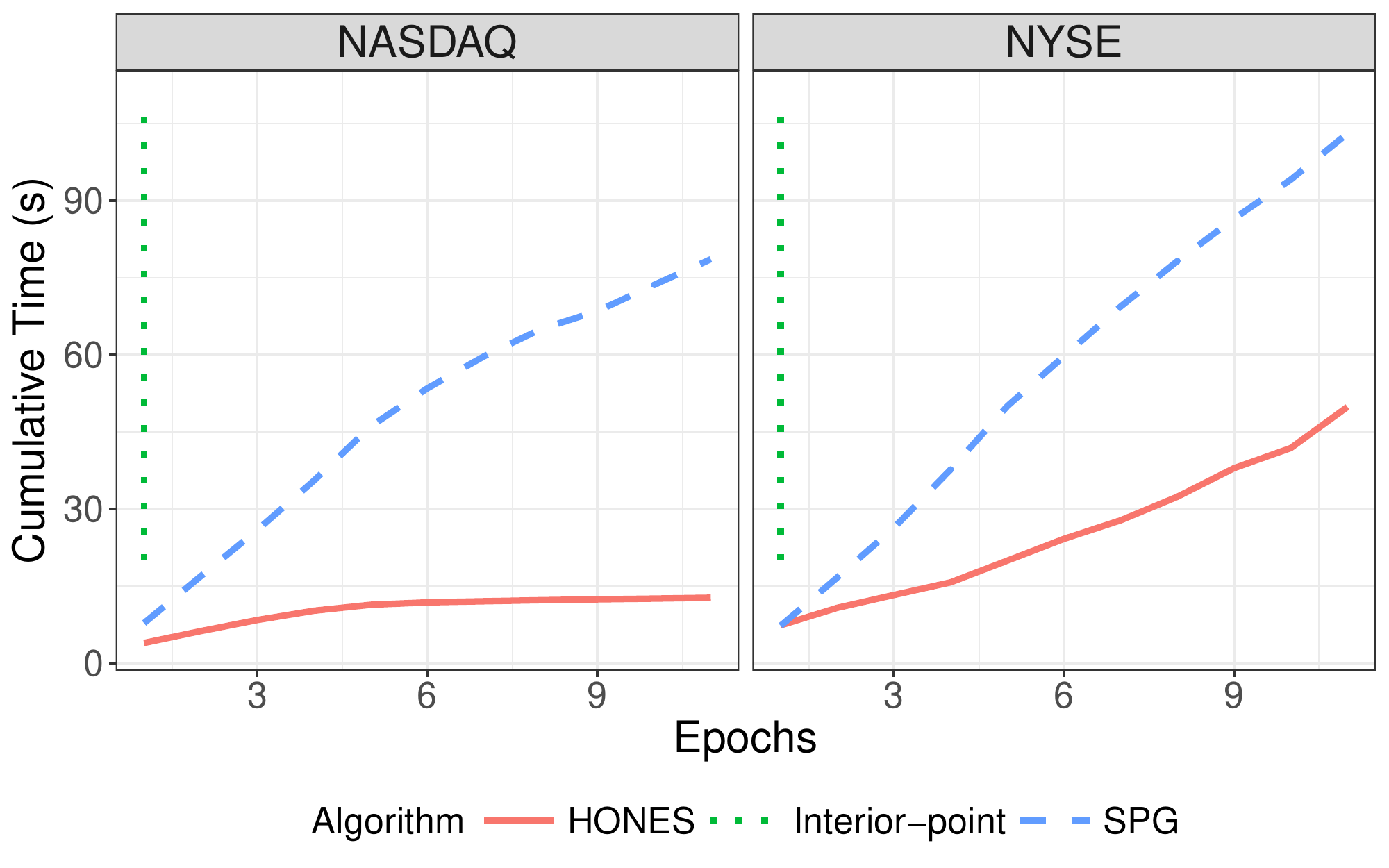}
  \end{center}
  \caption{Cumulative running time of \OurMethod, SPG and interior-point method on NYSE and NASDAQ dataset for universal portfolio management. Each epoch has 252 measurements.}
  \label{fig:ons_cumtime}
\end{figure}

To explain the different behavior on two datasets, we report the average solution sparsity in Table \ref{tab:ons}. Recall that \OurMethod is accurate in every step and we confirm this by checking the KKT condition for each solution. Surprisingly, the solutions are generally sparse for both datasets, as suggested by existing theory \citep{chen13, chen2015new}. It is not surprising that the solutions are sparser on NASDAQ due to the high volatility. As indicated by our theory, the efficiency gain should be more significant in this case.

\begin{table}[h]
  \centering
  \caption{Characteristics of \OurMethod for universal portfolio management on NYSE and NASDAQ data. (Top) solution Sparsity on NYSE and NASDAQ datasets, including the average, standard error, maximum and minimum of the support size; (Bottom) distribution of $e_{t}$, the number of excess turning points, on NYSE and NASDAQ datasets.}\label{tab:ons}
  \begin{tabular}{ccccc}
    \toprule
    Dataset & \multicolumn{4}{c}{sparsity} \\
    \midrule
     & mean & std. & max & min\\
    NYSE & 16.0 & 8.6& 98 & 4\\
    NASDAQ & 6.46 & 3.7 & 64 & 2 \\ 
    \toprule
    Dataset & \multicolumn{2}{c}{proportion of zeros} & \multicolumn{2}{c}{quantiles}\\
     & \multicolumn{2}{c}{} & 99\%& 99.9\%\\
    NYSE & \multicolumn{2}{c}{65.8\%} & 8 & 22\\
    NASDAQ & \multicolumn{2}{c}{85.4\%} & 4 & 11 \\
    \bottomrule
  \end{tabular}
\end{table}

Finally, we examine our conjecture in Section \ref{subsec:turningpoints} on the number of turning points. As explained there, a benchmark for $k_{t}$ is $|\S_{t}\setminus \S_{t - 1}| + |\S_{t - 1}\setminus \S_{t}|$. We refer to $e_{t} = (k_{t} - |\S_{t}\setminus \S_{t - 1}| - |\S_{t - 1}\setminus \S_{t}|) / 2$ as the number of \emph{excess turning points}; see Section \ref{subsec:turningpoints} for details. For each synthetic dataset, we report the proportion of zero $e_{t}$ in Table \ref{tab:ons}. It is clear that most steps (65.8\% for NYSE data and 85.4\% for NASDAQ data) are predicted by our conjecture and almost all steps (over 99\% for both data) are not far away from our conjecture. This indicates that $k_{t}$ is an accurate proxy for the number of turning points.

\subsection{Synthetic Data}

\begin{figure}[H]
  \begin{center}
    \includegraphics[width = 0.8\textwidth, height = 0.6\textwidth]{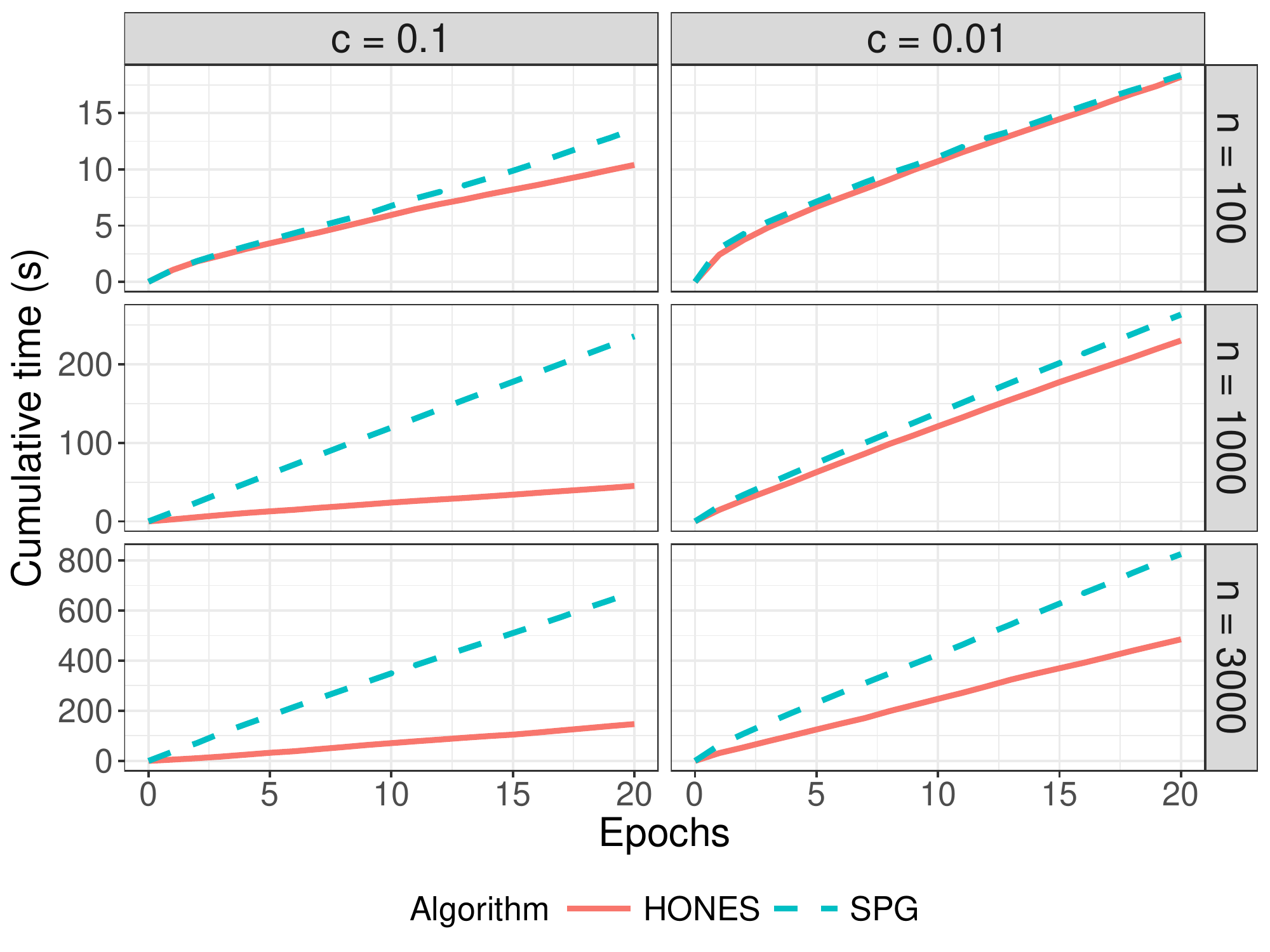}
  \end{center}
  \caption{Cumulative running time of \OurMethod and SPG on synthetic datasets. Each row corresponds to a dimension $n$ and each column corresponds to a factor $c$.}\label{fig:sim}
\end{figure}

As discussed in Section \ref{sec:intro}, the problem \eqref{eq:stqp} is indeed more general than universal portfolio management. To examine our algorithm comprehensively, we consider \eqref{eq:stqp} under other setups. First we consider the problem with the following structure on synthetic data:
\begin{equation}\label{eq:stqp2}
\min_{x\in \Delta_{n}}\frac{1}{2}(x - y)^{T}A^{(t)}(x - y).
\end{equation}
Without the superscript $t$, this problem is called \emph{standard quadratic programming problem} and has attracted the attention in various fields, e.g. \cite{bomze98, scozzari08, bomze08}. It is of particular interest to study the case where $A^{(t)}$ is a random matrix generated from some distribution. For instance, \citet{chen13} consider a Wigner matrix $A$ with $\{A_{ij}: 1\le i\le j\le n\}$ being i.i.d. random variables and $A_{ji} = A_{ij}$. In this article, we consider another important class of matrices in random matrix theory --- covariance matrix of rectangular matrices with i.i.d. entries, i.e. $A^{(t)} = \sum_{s=1}^{t}g^{(s)}(g^{(s)})^{T}$, where $g^{(s)}\in \R^{n}$ has i.i.d. Gaussian entries; see \citet{bai10} for more discussion. In this case the matrix flow $\{A^{(t)}: t = 1, 2,\ldots \}$ satisfies \eqref{eq:update}. To avoid singularity, we set $A^{(0)} = \eps I$ where $\eps = 10^{-4}$ is a small positive number. Then it is easy to see that $A^{(t)}$ is non-singular for all $t$ with probability $1$ so that the solution $x^{(t)}$ is unique. The vector $y$ governs the sparsity of the solution. To see this, consider the isotropic case where $A = I$, the solution of \eqref{eq:stqp2} is the projection of $y$ onto the simplex. If $y$ is a zero vector, the optimum is a dense vector with all entries $\frac{1}{n}$. In contrast, if $y$ has large entries, the simplex constraint will pull the optimum towards that direction and forces the other entries to be zero , in which case the solution is sparse. The same phenomenon is observed in anisotropic case as will be shown below.

\begin{table}
  \centering
  \caption{Solution Sparsity and overall computation gain of \OurMethod over SPG on synthetic datasets. The first two columns correspond to the dimension and the factor $c$; the third column gives the mean of support size with its standard deviation (in the parentheses); the fourth column gives the maximum support size along the path; the last two columns show the ratio of overall running time between SPG and \OurMethod.}\label{tab:sim1}
  \begin{tabular}{ccccc}
    \toprule
    \multicolumn{2}{c}{scenarios} & \multicolumn{2}{c}{sparsity} & speed ratio\\
    \midrule
    $n$ & $c$ & mean (s.e.) & max& \\
    \midrule
    100 & 0.01 & 81.3 (3.5) & 88 & 0.97 \\
    1000 & 0.01 & 158.3 (30.6) & 190 & 1.13 \\
    3000 & 0.01 & 146.4 (34.9) & 225 & 1.68 \\
    100 & 0.1 & 19.5 (2.3) & 26 & 1.34 \\
    1000 & 0.1 & 22.0 (5.3) & 32 & 5.26 \\
    3000 & 0.1 & 18.3 (4.2) & 27 & 4.51 \\
    \bottomrule
  \end{tabular}
\end{table}

Our goal is to explore the scalability, in terms of the dimension, and the adaptivity to solution sparsity of the algorithms. For the aspect of the dimension, we consider three dimensions: $\{100, 1000, 3000\}$; for the aspect of the sparsity,  we set $y = cy_{0}$ with $y_{0}$ generated from $N(0, I_{n\times n})$ and $c\in \{0.01, 0.1\}$. For each case, we set the total number of steps as $5000$ and treat every 250 steps as an epoch (20 epochs in total). Similar to the previous case, we report the cumulative running time in Figure \ref{fig:sim} and other information in Table \ref{tab:sim1} and Table \ref{tab:sim2}. Here we exclude the interior-point method since it is too slow.

\begin{table}[hbp]
  \centering
  \caption{Distribution of $e_{t}$, the number of excess turning points, on synthetic datasets. The first two columns correspond to the dimension and the factor $c$; the third column gives the proportion of zero $e_{t}$; the last two columns give the 99\% and 99.9\% quantiles of $e_{t}$.}\label{tab:sim2}
  \begin{tabular}{ccccc}
    \toprule
    \multicolumn{2}{c}{scenarios} & proportion of zeros & \multicolumn{2}{c}{quantiles}\\
    \midrule
    $n$ & $c$ &  & 99\%& 99.9\%\\
    \midrule
    100 & 0.01 & 99.8\% & 0 & 1\\
    1000 & 0.01 & 98.9\% & 1 & 4\\
    3000 & 0.01 & 98.7\% & 1 & 6\\
    100 & 0.1 & 99.9\% & 0 & 0 \\
    1000 & 0.1 & 99.8\% & 0 & 1\\
    3000 & 0.1 & 99.8\% & 0 & 1 \\
    \bottomrule
  \end{tabular}
\end{table}

First we notice that \OurMethod is more scalable in high dimension. Moreover, as expected, a larger $c$ gives sparser solutions along the path and \OurMethod significantly outperforms SPG when the solution is sparse ($c = 0.1$) especially for large-scale problems ($n = 1000, 3000$), in which \OurMethod is over 4.5 times faster than SPG. When the solution is not sparse ($c = 0.01$), \OurMethod is similar to SPG in small-scale problem ($n = 100$) and increasingly more efficient when the size of the problem grows. Finally the results in Table \ref{tab:sim2} show that our conjecture in Section \ref{subsec:turningpoints} is extremely accurate in this setting.

\subsection{Markowitz Portfolio Selection}
In this Subsection, we consider the application of HONES algorithm on sequential Markowitz portfolio selection problem. In this problem, the vector of stock prices is assumed to be a random vector with mean $\mu$ and covariance matrix $\Sigma$ and the investor observes a realization from this distribution. The goal is to minimize the risk, measured by the variance $x^{T}\Sigma x$ of a given portfolio while maintaining a reasonably high average return $x^{T}\mu$. The problem is usually formulated as follows:
\[\min_{x_{t}\in \R^{n}} \frac{1}{2}x_{t}^{T}\Sigma x_{t} - \lambda x_{t}^{T}\mu.\]
For simplicity we assume $\lambda = 0$. In practice, $\mu$ and $\Sigma$ are unknown and one has to replace them by estimators. The most natural estimators $\hat{\Sigma}^{(t)}$ and $\hat{\mu}^{(t)}$ are the sample covariance matrix and the sample mean, i.e.
\[\hat{\mu}^{(t)} = \frac{1}{t}\sum_{s=1}^{t}w^{(s)}, \hat{\Sigma}^{(t)} = \frac{1}{t}\sum_{s = 1}^{t}(w^{(s)} - \hat{\mu}^{(s)})(w^{(s)} - \hat{\mu}^{(s)})^{T},\]
where $w^{(t)}$ is the vector of daily gains, measured by the entrywise log return $\log (\gamma_t)$, of all assets of interest. Via some algebra, it can be shown that 
the problem is equivalent to \eqref{eq:stqp} with 
\[A^{(t)} = t\hat{\Sigma}^{(t)}, \c^{(t)} = t\hat{\mu}^{(t)}, g^{(t)} = \sqrt{\frac{t - 1}{t}}\lb w^{(t)} - \hat{\mu}^{(t - 1)}\rb.\]
We should emphasize that the solution in this way is optimal from hindsight, which is different from the notions in online learning regret minimization. Nonetheless, it is an interesting and important problem in the context of back testing and risk management since the result can reveal the hidden structure of the assets; see \cite{brodie2009sparse,fan2008asset,fan2012vast} for more details. 

\begin{figure}[H]
  \begin{center}
    \includegraphics[width = 0.8\textwidth]{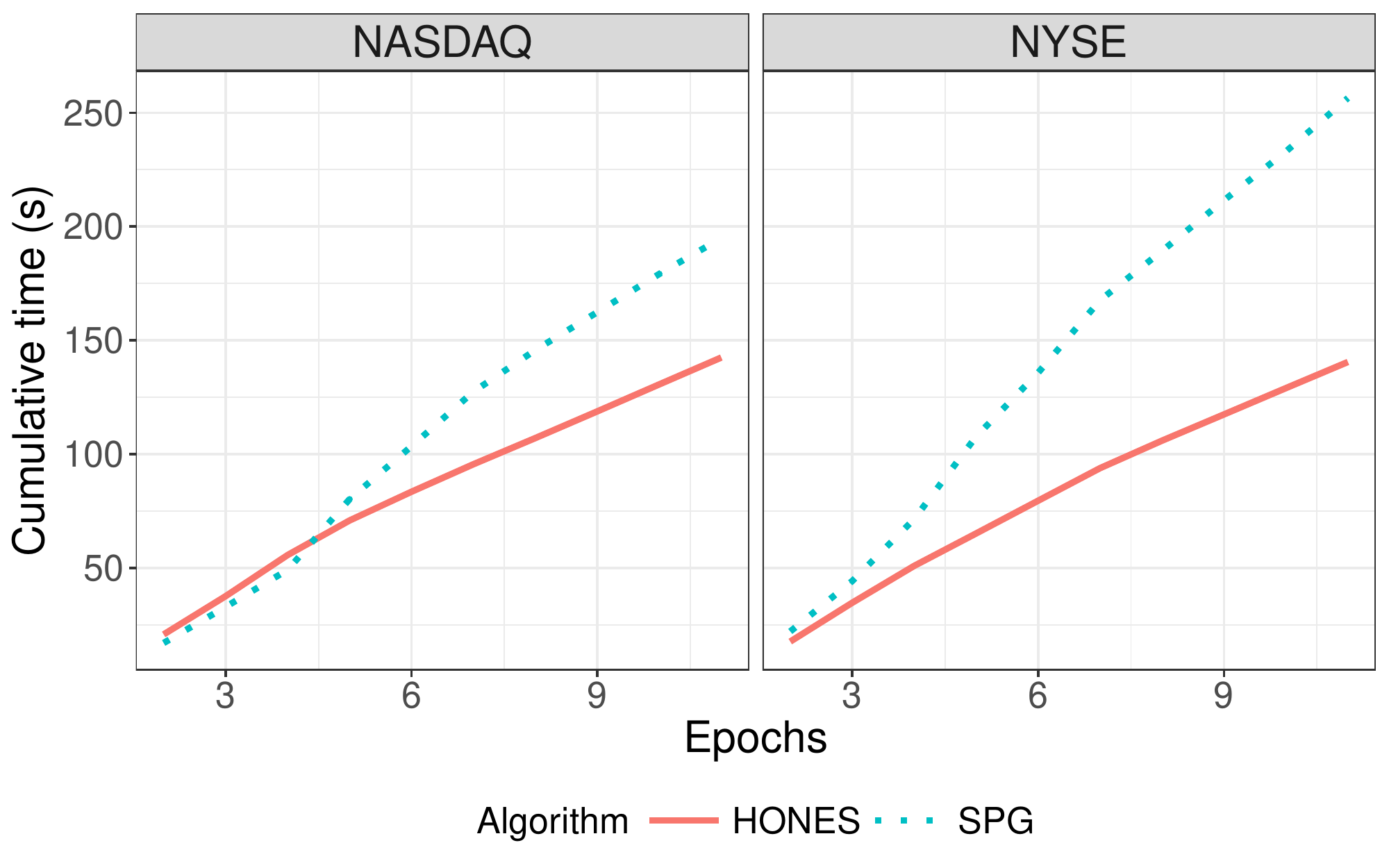}
  \end{center}
  \caption{Cumulative running time of \OurMethod and SPG on NYSE and NASDAQ dataset for Markowitz portfolio management. Each epoch has 252 measurements.}
  \label{fig:real}
\end{figure}

Similar to the Section \ref{subsec:ons}, we report the cumulative running time of \OurMethod and SPG in Figure \ref{fig:real} and report other information in Table \ref{tab:real}. Again, \OurMethod is more efficient than SPG. We also try the interior-point method on each dataset. It is 67 times slower than \OurMethod on NYSE dataset and 31 times slower than \OurMethod on NASDAQ dataset. Finally, the conjecture on the number of turning points is also validated by the results in the bottom panel of Table \ref{tab:real}.

\begin{table}[h]
  \centering
  \caption{(Top) solution Sparsity on NYSE and NASDAQ datasets for Markowitz portfolio management, including the average, standard error, maximum and minimum of the support size; (Bottom) distribution of $e_{t}$, the number of excess turning points, on NYSE and NASDAQ datasets.}\label{tab:real}
  \begin{tabular}{ccccc}
    \toprule
    Dataset & \multicolumn{4}{c}{sparsity} \\
    \midrule
     & mean & s.e. & max & min\\
    NYSE & 49.8 & 22.7& 108 & 30\\
    NASDAQ & 148.9 & 17.8 & 192 & 127 \\ 
    \toprule
    Dataset & \multicolumn{2}{c}{proportion of zeros} & \multicolumn{2}{c}{quantiles}\\
     & \multicolumn{2}{c}{} & 99\%& 99.9\%\\
    NYSE & \multicolumn{2}{c}{97.9\%} & 1 & 10\\
    NASDAQ & \multicolumn{2}{c}{96.2\%} & 3 & 22 \\
    \bottomrule
  \end{tabular}
\end{table}

\section{Conclusion}\label{sec:conclusion}
In this article, we propose an efficient algorithm \OurMethod to solve the sequential generalized projection problem \eqref{eq:stqp} with rank-one update \eqref{eq:update}, appeared as the building block and the bottleneck of Online Newton Step. \OurMethod is a homotopy continuation method that interpolates the consecutive objectives. By a careful derivation, we calculate the exact number of atomic operations, up to an additive constant (Theorem \ref{thm:complexity}) and show that \OurMethod has a good performance when the support of the solution changes slowly with time or is sparse as in many applications. We also provide a heuristic conjecture on the number of turning points which plays an important role in the computation complexity. The efficiency of \OurMethod algorithm is confirmed by extensive experiments on both synthetic and real data. The experimental results also strongly support our heuristic conjecture on the number of turning points and shed light on the theoretical efficiency of \OurMethod.

\bibliography{HONES}
\bibliographystyle{plainnat}

\appendix
\section{Roadmap of Appendices}
The general idea of \OurMethod algorithm has been presented in section \ref{sec:algo}. However, to implement it efficiently, we need much more effort to explore the structure of the solution path and find common quantities which are used by multiple sub-routines. To make the derivation well-organized, we start from considering the case where only one of $\A$ and $\c$ is time-varying while the other parameter is fixed. The case where $\A$ is time-varying and the case where $\c$ is time-varying are considered separately in Appendix \ref{app:varyA} and \ref{app:varyc}. Then in Appendix \ref{app:varyAc}, we combine two components and state the implementation for the general case. 

In each following appendix, we will first define a list of case-specific intermediate variables, which are the key ingredients to improve efficiency. Then we describe the whole procedure followed by details of each sub-routine. Finally, we give a detailed complexity analysis at the end of each appendix.

\section{Implementation of \OurMethod Algorithm With Time-Varying $\A$ and Fixed $\c$}\label{app:varyA}
\subsection{Intermediate Variables}
Although (\ref{eq:xs})-(\ref{eq:muzero}) completely define the solution, they involves messy terms. To simplify the notations, we define three lists of intermediate variables. We should emphasize that these variables also play important roles in the algorithm design since they capture the quantities repeatedly appeared and unnecessary computation can be avoided by storing their values in memory. 

The intermediate variables are defined as follows. First, let $M$ be a $n\times n$ matrix such that
\begin{equation}
  \label{eq:param_mat}
  \Mss = \Ass^{-1}, \quad \Mcs = -\Acs\Ass^{-1}, \quad \Mc = 0.
\end{equation}
For large-scale problem where $n$ is prohibitively large, we can only store a $n\times |S|$ matrix by removing the zero entries of $M$. This saves storage cost significantly. Then we define two vectors $\eta, \teta\in\R^{n}$ such that 
\begin{equation}
  \label{eq:param_vec}
\etas = \Mss \gs, \quad \etac = \gc + \Mcs\gs, \quad \tetas = \Mss\ones, \quad \tetac = \onec + \Mcs\ones\in\R^{n}.
\end{equation}
Last we define four scalars.
\begin{align}
  &\D = \ones^{T}\Ass^{-1}\ones, \quad \Dg = \ones^{T}\Ass^{-1}\gs, \quad \Dgg = \gs^{T}\Ass^{-1}\gs \quad \Dgy = - \etas^{T}\cs   \label{eq:param_scalar}.
\end{align}
Note that all variables are functions of $\lambda$ if $A$ is replaced by $A + \lambda gg^{T}$ and we denote them by $\cdot(\lambda)$. For example, 
\[\D(\lambda) = \ones^{T}(\Ass + \lambda\gs\gs^{T})^{-1}\ones,\]
and others can be defined in a similar fashion. The following lemma formulates these functions.
\begin{lemma}\label{lem:var_lambda}
Let $\flam = \frac{\lam}{1 + \lam\Dgg}$. Before any entry of $(\xs, \muc)$ hitting $0$, it holds that 
\begin{itemize}
\item $\Ms(\lam) = \Ms - \flam \eta\etas^{T}$;
\item $\eta(\lam) = \frac{\eta}{1 + \lam\Dgg}$;
\item $\teta(\lam) = \teta - \flam \Dg \eta$;
\item $\D(\lam) = \D - \flam \Dg^{2}$;
\item $(\Dg(\lam), \Dgg(\lam), \Dgy(\lam)) = \frac{1}{1 + \lam \Dgg}(\Dg, \Dgg, \Dgy)$.
\end{itemize}
\end{lemma}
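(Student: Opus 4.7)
The proof reduces to a careful application of the Sherman-Morrison formula, since replacing $\A$ by $\A + \lam gg^{T}$ amounts to the rank-one update $\Ass \mapsto \Ass + \lam \gs\gs^{T}$ together with the shift $\Acs \mapsto \Acs + \lam \gc\gs^{T}$. The plan is to update the matrix blocks of $\M$ first, and then read off the vector and scalar quantities from those formulas.

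First, I would apply Sherman-Morrison directly to obtain
\[ \Mss(\lam) = (\Ass + \lam \gs\gs^{T})^{-1} = \Mss - \flam\, \etas \etas^{T}, \]
using $\etas = \Mss\gs$ and $\gs^{T}\Mss\gs = \Dgg$. Next, for the off-diagonal block, I would expand $\Mcs(\lam) = -(\Acs + \lam \gc\gs^{T})\,\Mss(\lam)$, exploit the identities $\Acs\etas = -\Mcs\gs$ and $\gs^{T}\Mss = \etas^{T}$, and collect the rank-one terms. The key algebraic step is recognizing that $\lam(1 - \flam\Dgg) = \flam$, which follows from $1 - \flam\Dgg = 1/(1+\lam\Dgg)$. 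Combined with $\etac = \gc + \Mcs\gs$, this collapses the two surviving rank-one terms into the compact form $\Mcs(\lam) = \Mcs - \flam\,\etac\etas^{T}$, and hence $\Ms(\lam) = \Ms - \flam\,\eta\etas^{T}$.

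Once the rank-one form of $\Ms(\lam)$ is in hand, everything else is mechanical. The vector identities follow from right-multiplying by $\gs$ and $\ones$: using $\etas^{T}\gs = \Dgg$ gives $\eta(\lam) = \eta/(1+\lam\Dgg)$, and using $\etas^{T}\ones = \Dg$ gives $\teta(\lam) = \teta - \flam\Dg\,\eta$. The scalar updates $\D(\lam) = \ones^{T}\Mss(\lam)\ones$, $\Dg(\lam) = \ones^{T}\Mss(\lam)\gs$, $\Dgg(\lam) = \gs^{T}\Mss(\lam)\gs$, and $\Dgy(\lam) = -\etas(\lam)^{T}\cs$ all reduce to the stated expressions after using $1 - \flam\Dgg = 1/(1+\lam\Dgg)$ to pull out the common factor.

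The main obstacle is the bookkeeping in the $\Mcs(\lam)$ computation: the two perturbation contributions --- one from inverting $\Ass + \lam\gs\gs^{T}$ and one from the shifted $\Acs$ --- must be combined cleanly into the single rank-one term $\flam\,\etac\etas^{T}$. Everything after that is a one- or two-line calculation, and the lemma just packages these identities in the order they will be consumed by the algorithm.
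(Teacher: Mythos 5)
Your proposal is correct and follows essentially the same route as the paper's proof: Sherman--Morrison for $\Mss(\lam)$, expansion of $\Mcs(\lam) = -(\Acs + \lam\gc\gs^{T})\Mss(\lam)$ collapsed via the identity $\lam(1-\flam\Dgg)=\flam$ into $-\flam\,\etac\etas^{T}$, and then the vector and scalar updates read off by multiplying the rank-one form of $\Ms(\lam)$ by $\gs$, $\ones$, and $\cs$. No gaps; the key algebraic step you flag is exactly the one the paper uses.
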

\begin{proof}
By Sherman-Morrison-Woodbury formula, 
\[\Mss(\lam) = (\Ass + \lam\gs\gs^{T})^{-1} = \Ass^{-1} - \lam\frac{\Ass^{-1}\gs\gs^{T}\Ass^{-1}}{1 + \lam \gs^{T}\Ass^{-1}\gs} = \Mss - \flam \etas\etas^{T}.\]
This implies that 
\begin{align*}
  \Mcs(\lam) &= - (\Acs + \lam\gc\gs^{T})(\Ass^{-1} - \flam \etas\etas^{T})\\
& = \Mcs - \lam\gc\gs^{T}\Ass^{-1} + \lam\flam \gc\gs^{T}\etas\etas^{T} + \flam\Acs\etas\etas^{T}\\
& = \Mcs - \lam\gc\etas^{T} + \lam\flam \Dgg \gc\etas^{T} + \flam\Acs\etas\etas^{T}\use{\Dgg = \gs^{T}\etas}\\
& = \Mcs - (\lam - \lam\flam\Dgg)\gc\etas^{T} +\flam\Acs\etas\etas^{T}\\
& = \Mcs - \flam \gc\etas^{T} + \flam \Acs \etas\etas^{T}\use{\lam - \lam\flam\Dgg = \flam}\\
& = \Mcs - \flam(\gc - \Acs\Ass^{-1}\gs)\etas^{T}\\
& = \Mcs - \flam \etac\etas^{T}.
\end{align*}
Putting pieces together, we obtain that
\[\Ms(\lam) = \Ms - \flam \eta\etas^{T}.\]
Based on $\Ms(\lam)$, it is straightforward to derive other variables. For $\eta(\lam)$,
\begin{align*}
  \etas(\lam) &= \Mss(\lam)\gs = \etas - \flam\etas\etas^{T}\gs\\
& = (1 - \flam \Dgg)\etas = \frac{\etas}{1 + \lam\Dgg};\\
\etac(\lam) & = \gc + \Mcs(\lam)\gs = \etac - \flam\etac\etas^{T}\gs\\
& = (1 - \flam \Dgg)\etac = \frac{\etac}{1 + \lam\Dgg}.
\end{align*}
Thus,
\[\eta(\lam) = \frac{\eta}{1 + \lam\Dgg}.\]
Similarly,,
\begin{align*}
  \tetas(\lam) &= \Mss(\lam)\ones = \tetas - \flam\etas\etas^{T}\ones\\
& = \tetas - \flam \Dg \etas;\\
\tetac(\lam) & = \onec + \Mcs(\lam)\ones = \onec - \flam\etac\etas^{T}\ones\\
& = \tetac - \flam \Dg \etac,
\end{align*}
and hence 
\[\teta(\lam) = \tetas - \flam\Dg \eta.\]
The last four scalars are even easier to handle. In fact, $\D(\lam)$ can be derived directly by
\[\D(\lam) = \ones^{T}\lb\Mss - \flam \etas\etas^{T}\rb\ones = \D - \flam \Dg^{2}\]
By reformulating the other three variables, the last statement can be proved,
\begin{align*}
(\Dg(\lam), \Dgg(\lam), \Dgy(\lam)) &= (\ones^{T}\etas(\lam), \gs^{T}\etas(\lam),  - \cs^{T}\etas(\lam))\\ 
&= \frac{1}{1 + \lam\Dgg}(\ones^{T}\etas, \gs^{T}\etas,  - \cs^{T}\etas)\\ 
& = \frac{1}{1 + \lam\Dgg}(\Dg, \Dgg, \Dgy).
\end{align*}
\end{proof}

\subsection{Implementation}
Lemma \ref{lem:var_lambda} implies that given the function values of the intermediate variables at $\lam = 0$, the function values at a neighborhood of $0$ can be calculated directly. Within time $t$, all intermediate variables will be update correspondingly when the support changes. It has been shown in Theorem \ref{thm:var_lambda} that updating $\M$ requires $n|S|$ operations while updating other variables only requires $n$ operations. When the problem transfer from time $t$ to time $t + 1$, the variables $(\eta, \Dg, \Dgg, \Dgy)$ needs to be recalculated since it depends on a new $g^{(t + 1)}$. In contrast, $(\M, \teta, \D)$ can be updated in the same way as in time $t$. In summary, $(\M, \teta, \D)$ is shared by for all times while $(\eta, \Dg, \Dgg, \Dgy)$ is only used in a single time. For compact notations, we define $\Par_{1}$ and $\Par_{2}$ as
\begin{equation}
  \label{eq:Par1Par2}
  \Par_{1} = \{\M, \teta, \D\}, \quad \Par_{2} = \{\eta, \Dg, \Dgg, \Dgy\}.
\end{equation}
In addition, we denote $\v$ by the concatenation of $\xs$ and $-\muc$, i.e. 
\begin{equation}\label{eq:v}
\v_{\S} = \xs, \quad \v_{\S^{c}} = -\muc,
\end{equation} 
as a $n\times 1$ vector. It will be shown in the next subsection that $\v(\lam)$ can be expressed in a concise way.

Algorithm \ref{algo:real_time_hop} describes the full implementation of \OurMethod algorithm, which solves the online problem (\ref{eq:stqp}) with $\c$ fixed. The sub-routines involved will be discussed separately in following subsections. Roughly speaking, after initialization, we enter into the outer-loop and try to solve (\ref{algo:real_time_hop}) at time $t$ using the information from time $t - 1$. Starting from $\lam = 0$, we search for the next $\lam$ that pushes one entry of $\v$ to zero. FIND\_LAMBDA fulfills this goal and also reports the corresponding entry $j$. If $j\in \S$ then $j$ is removed from $S$ and otherwise $j$ is added into $\S$. Since $(\v, \mu_{0}, \Par_{1}, \Par_{2})$ are all functions of $\lam$, we update them by UPDATE\_BY\_LAMBDA, in which $\lam^{\mathrm{inc}}$ denotes the increment to reach the next turning point from the current one. Unlike $(\v, \mu_{0})$, $(\Par_{1}, \Par_{2})$ has discontinuity at each turning point $\lam$ due to the change of support $\S$. They are updated by UPDATE\_SHRINK\_SUPPORT and UPDATE\_EXPAND\_SUPPORT depending on whether $\S$ is shrinked or expanded. The procedure is repeated until $\lam$ cross over 1 and an inner-loop finishes. At the end, $\Par_{2}$ is recomputed for new $\g^{(t + 1)}$, which is achieved by DIRECT\_UPDATE. 

\begin{algorithm}
\caption{\OurMethod Algorithm for time-varying $\A$ and fixed $\c$}\label{algo:real_time_hop}
\textbf{Inputs: } Initial matrix $\A^{(0)}$, vectors $\c$, matrix-update-vectors $\{g^{(t)}, t = 1, 2, \ldots\}$.

\textbf{Initialization: }
\begin{algorithmic}
  \STATE $x\gets$ as the optimum corresponding to $A^{(0)}$;
  \STATE $\S\gets \supp(x)$;
  \STATE Calculate $(\x, \mu, \mu_{0})$ via (\ref{eq:xs})-(\ref{eq:muzero})
  \STATE $\v_{\S} \gets \xs, \v_{\S^{c}}\gets -\muc$;
  \STATE Calculate intermediate variables $(\Par_{1}, \Par_{2})$ via (\ref{eq:param_mat})-(\ref{eq:param_scalar}) with $g = \g^{(1)}$.
\end{algorithmic}

\textbf{Procedure: }
\begin{algorithmic}[1]
  \FOR{$t = 1, 2, \cdots$.}
  \STATE $\lambda\gets 0$;
  \WHILE{$\lambda < 1$}
     \STATE $(\lam^{\mathrm{inc}}, j, S^{\mathrm{new}})\gets \mathrm{FIND\_LAMBDA}(S, \v, \mu_{0}; \Par_{1}, \Par_{2})$;
     \STATE $\lam^{\mathrm{inc}} \gets \min\{\lam^{\mathrm{inc}}, 1 - \lam\}$;
     \STATE $\lam\gets \lam + \lam^{\mathrm{inc}}$;
     \STATE $(\v, \mu_{0}; \Par_{1}, \Par_{2})\gets \mathrm{UPDATE\_BY\_LAMBDA}(\lam^{\mathrm{inc}}; \v, \mu_{0}; \Par_{1}, \Par_{2})$;
     \IF {$S^{\mathrm{new}} = S\cup \{j\}$}
        \STATE $(\Par_{1}, \Par_{2})\gets \mathrm{UPDATE\_EXPAND\_SUPPORT}(\lam, S, j; \c, \g^{(t)}, \Par_{1}, \Par_{2})$;
     \ELSIF {$S^{\mathrm{new}} = S\setminus \{j\}$}
        \STATE $(\Par_{1}, \Par_{2})\gets \mathrm{UPDATE\_SHRINK\_SUPPORT}(S, j; \c, \g^{(t)}, \Par_{1}, \Par_{2})$;
     \ENDIF
     \STATE $S\gets S^{\mathrm{new}}$.
  \ENDWHILE
  \STATE $\Par_{2}\gets \mathrm{DIRECT\_UPDATE}(S, \c, g^{(t + 1)}; \Par_{1}, \Par_{2})$;
  \STATE $A\gets A + g^{(t)}(g^{(t)})^{T}$;
  \STATE $x_{\S}^{(t)}\gets \xs, \quad x_{\S^{c}}^{(t)} \gets 0$.
  \ENDFOR
\end{algorithmic}

\textbf{Output:} $x^{(1)}, x^{(2)}, \cdots$.
\end{algorithm}

\subsection{\textnormal{FIND\_LAMBDA}}

With the help of intermediate variables, we can express $(\xs, \muc, \mu_{0})$ in a compact way. 
\begin{theorem}\label{thm:var_lambda}
 Before any entry of $(\xs, \muc)$ hitting $0$, it holds that
\begin{equation}
  \label{eq:mu0}
  \mu_{0}(\lam) = \mu_{0} + \frac{\flam}{\D - \flam \Dg^{2}}\cdot \Dg(\Dg\mu_{0} - \Dgy).
\end{equation}
and 
\begin{equation}
  \label{eq:xmu}
  \v(\lam) \triangleq \com{\xs(\lam)}{-\muc(\lam)} = \v + \frac{\flam}{\D - \flam \Dg^{2}}\cdot (\Dg\mu_{0} - \Dgy)\cdot (\Dg \teta - \D \eta),
\end{equation}
\end{theorem}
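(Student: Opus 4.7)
The plan is to plug the perturbed intermediate variables from Lemma \ref{lem:var_lambda} into the closed-form expressions (\ref{eq:xs})–(\ref{eq:muzero}) and then reorganize the result so that the change from $\lambda = 0$ is expressed in terms of the single scalar factor $\beta(\lambda) \triangleq \flam(D_g\mu_0 - \Dgy)/(D - \flam D_g^2)$. Throughout I will freely use the identities $\tetas = \Ass^{-1}\ones$, $\etas = \Ass^{-1}\gs$ and $\Dgy = -\etas^T \cs$.

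First I would handle $\mu_0(\lam)$. Starting from (\ref{eq:muzero}) applied to $\A + \lam\g\g^T$, the denominator is exactly $D(\lam) = D - \flam D_g^2$ by Lemma \ref{lem:var_lambda}. For the numerator, I would use $\Mss(\lam) = \Mss - \flam \etas\etas^T$ to write
\[\ones^T \Mss(\lam)\cs = \ones^T \Ass^{-1}\cs - \flam D_g(\etas^T\cs) = (1 - D\mu_0) + \flam D_g \Dgy,\]
so $1 - \ones^T\Mss(\lam)\cs = D\mu_0 - \flam D_g \Dgy$. Subtracting $\mu_0$ and combining over the common denominator gives
\[\mu_0(\lam) - \mu_0 = \frac{\flam D_g(D_g\mu_0 - \Dgy)}{D - \flam D_g^2} = \beta(\lam) D_g,\]
which is (\ref{eq:mu0}).

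Next I would treat $v(\lam)$. Using (\ref{eq:xs}) and (\ref{eq:muc}), both blocks admit the uniform expression
\[v(\lam) = \mu_0(\lam)\,\teta(\lam) + \begin{pmatrix} \Mss(\lam)\cs \\ \cc + \Mcs(\lam)\cs \end{pmatrix}.\]
Applying Lemma \ref{lem:var_lambda} component-wise, $\teta(\lam) = \teta - \flam D_g \eta$ and the bracketed vector equals its $\lam = 0$ value plus $\flam \Dgy\,\eta$ (using $\Mss(\lam) - \Mss = -\flam \etas\etas^T$ and $\Mcs(\lam) - \Mcs = -\flam \etac\etas^T$ together with $\etas^T\cs = -\Dgy$). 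Collecting terms,
\[v(\lam) - v = (\mu_0(\lam) - \mu_0)\,\teta + \flam\bigl(\Dgy - \mu_0(\lam) D_g\bigr)\,\eta.\]

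The remaining step — and the one I expect to be the main algebraic hurdle — is showing that the coefficient of $\eta$ simplifies to exactly $-\beta(\lam) D$. Substituting $\mu_0(\lam) = \mu_0 + \beta(\lam) D_g$ into $\flam(\Dgy - \mu_0(\lam)D_g)$ yields
\[-\flam(\mu_0 D_g - \Dgy) - \flam \beta(\lam) D_g^2 = -\beta(\lam)(D - \flam D_g^2) - \flam \beta(\lam) D_g^2 = -\beta(\lam) D,\]
where the middle equality uses the definition of $\beta(\lam)$. Combined with the $\teta$ coefficient $\beta(\lam) D_g$ from the first step, this gives $v(\lam) - v = \beta(\lam)(D_g\,\teta - D\,\eta)$, which is precisely (\ref{eq:xmu}). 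The validity of all the Lemma \ref{lem:var_lambda} substitutions requires that $\S$ has not changed on the interval, which is exactly the ``before any entry of $(\xs,\muc)$ hits $0$'' hypothesis.
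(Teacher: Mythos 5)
Your proposal is correct and follows essentially the same route as the paper's proof: both substitute the Sherman--Morrison consequences of Lemma \ref{lem:var_lambda} into the KKT formulas (\ref{eq:xs})--(\ref{eq:muzero}), first derive $\mu_{0}(\lam)$ from the updated numerator $\D\mu_{0} - \flam\Dg\Dgy$ and denominator $\D - \flam\Dg^{2}$, and then regroup the remaining terms into a $\teta$-component with coefficient $\mu_{0}(\lam)-\mu_{0}$ and an $\eta$-component that collapses to $-\beta(\lam)\D$. The only difference is cosmetic: you treat the $\S$ and $\S^{c}$ blocks in one unified vector identity, whereas the paper carries out the identical algebra for $\xs(\lam)$ and $-\muc(\lam)$ separately.
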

\begin{proof}
First we prove (\ref{eq:mu0}). By definition, 
\begin{align*}
\mu_{0}(\lam) &= \frac{1 - \ones^{T}\Ass(\lam)^{-1}\cs}{\ones^{T}\Ass(\lam)^{-1}\ones} = \frac{1 - \ones^{T}\Mss(\lam)\cs}{\D(\lam)}.
\end{align*}
By Lemma \ref{lem:var_lambda}, 
\begin{align*}
&- \Mss(\lam)\cs  = - (\Mss - \flam\etas\etas^{T})\cs\\
= & - \Mss\cs - \flam \etas \etas^{T}\cs = - \Mss\cs + \flam\Dgy\etas.
\end{align*}
Thus the numerator of $\mu_{0}(\lam)$ can be written as
\[1 - \ones^{T}\ys + \ones^{T}(\Mcs^{T}\yc - \Mss\cs)- \flam\Dgy\ones^{T}\etas = \D\mu_{0} - \flam \Dgy\Dg.\]
The denominator of $\mu_{0}(\lam)$, by Lemma \ref{lem:var_lambda}, is formulated as
\[\D(\lam) = \D - \flam \Dg^{2}.\]
Putting the pieces together results in
\[\mu_{0}(\lam) = \frac{\D\mu_{0} - \flam \Dgy\Dg}{\D - \flam\Dg^{2}} = \mu_{0} + \frac{\flam}{\D - \flam \Dg^{2}}\cdot \Dg(\Dg\mu_{0} - \Dgy).\]
Plug $\mu_{0}(\lam)$ into (\ref{eq:xs}), we obtain that
\begin{align*}
  \xs(\lam) &= \mu_{0}(\lam)\tetas(\lam) + \Ass(\lam)^{-1}\cs\\
  & = \Ass^{-1}\cs + \flam\Dgy\etas + (\mu_{0}(\lam) - \mu_{0})\tetas(\lam) + \mu_{0}\tetas(\lam)\\
  & = \xs + \flam\Dgy\etas + (\mu_{0}(\lam) - \mu_{0})\tetas(\lam) + \mu_{0}(\tetas(\lam) - \tetas)\\
 & = \xs + \flam\Dgy\etas + \frac{\flam}{\D - \flam \Dg^{2}}\cdot \Dg(\Dg\mu_{0} - \Dgy)\tetas(\lam) - \mu_{0}\flam \Dg\etas \use{\mathrm{Lemma \ref{lem:var_lambda}}}\\
 & = \xs + \frac{\flam}{\D - \flam \Dg^{2}}\cdot \Dg(\Dg\mu_{0} - \Dgy)\tetas(\lam) - \flam (\Dg\mu_{0} - \Dgy)\etas\\
& = \xs + \frac{\flam}{\D - \flam\Dg^{2}}\cdot (\Dg\mu_{0} - \Dgy)\cdot (\Dg\tetas(\lam) - (\D - \flam\Dg^{2})\etas)\\
& = \xs + \frac{\flam}{\D - \flam\Dg^{2}}\cdot (\Dg\mu_{0} - \Dgy)\cdot \lb\Dg \tetas - \flam\Dg^{2}\etas - (\D - \flam\Dg^{2})\etas\rb\\
& =  \xs + \frac{\flam}{\D - \flam\Dg^{2}}\cdot (\Dg\mu_{0} - \Dgy)\cdot \lb\Dg \tetas - \D\etas\rb.
\end{align*}
Similarly, it follows from (\ref{eq:muc}) that 
\begin{align*}
  -\muc(\lam) &= \mu_{0}(\lam)\tetac(\lam) + \cc + \Mcs(\lam)\cs\\
& = -\muc -\mu_{0}\tetac + (\Mcs(\lam) - \Mcs)\cs + \mu_{0}(\lam)\tetac(\lam)\\
& = -\muc -\mu_{0}\tetac + \flam\Dgy\etac + \mu_{0}(\lam)\tetac(\lam)\\
& = -\muc + \flam\Dgy\etac + (\mu_{0}(\lam) - \mu_{0})\tetac(\lam) + \mu_{0}(\tetac(\lam) - \tetac)\\
& = -\muc + \flam\Dgy\etac + \frac{\flam}{\D - \flam \Dg^{2}}\cdot \Dg(\Dg\mu_{0} - \Dgy)\tetac(\lam) - \mu_{0}\flam \Dg\etac\\
& = -\muc + \frac{\flam}{\D - \flam \Dg^{2}}\cdot \Dg(\Dg\mu_{0} - \Dgy)\tetac(\lam) - \flam (\Dg\mu_{0} - \Dgy)\etac\\
& = -\muc + \frac{\flam}{\D - \flam \Dg^{2}}\cdot (\Dg\mu_{0} - \Dgy)(\Dg\tetac(\lam) - (\D - \flam \Dg^{2})\etac)\\
& = -\muc + \frac{\flam}{\D - \flam \Dg^{2}}\cdot (\Dg\mu_{0} - \Dgy)(\Dg\tetac - \flam \Dg^{2} \etac - (\D - \flam \Dg^{2})\etac)\\
& = -\muc + \frac{\flam}{\D - \flam \Dg^{2}}\cdot (\Dg\mu_{0} - \Dgy)(\Dg\tetac - \D \etac)
\end{align*}
In sum, 
\[  \com{\xs(\lam)}{-\muc(\lam)} = \com{\xs}{-\muc} + \frac{\flam}{\D - \flam \Dg^{2}}\cdot (\Dg\mu_{0} - \Dgy)\cdot (\Dg \teta - \D \eta).\]
\end{proof}

Theorem \ref{thm:var_lambda} indicates that searching for next $\lam$ is equivalent to solve $n$ linear equations. In fact, (\ref{eq:xmu}) can be abbreviated as 
\[\v(\lam) = \v + \frac{\flam}{\D - \flam \Dg^{2}}u = \frac{\D\v - (\Dg^{2}\v - u)\flam}{\D - \flam\Dg^{2}},\]
for $u = (\Dg\mu_{0} - \Dgy)\cdot (\Dg \teta - \D \eta)$. Let
\[\alpha = \mathrm{min}_{+}\left\{\frac{\D \v_{i}}{\Dg^{2}\v_{i} - u_{i}}: i = 1, 2, \ldots, n\right\}\]
where $\mathrm{min}_{+}(\Omega)$ denotes the minimum of all positive numbers contained in set $\Omega$. Then the target $\lam$ is the solution of $\flam = \alpha$, i.e.
\[\lam = \frac{\alpha}{1 - \alpha \Dgg}.\]
We should emphasize that the right-handed side might be negative if $\alpha \Dgg \ge 1$ in which case $\v$ never hits 0. Thus, we should set $\lam$ to be infinity. The implementation of FIND\_LAMBDA is stated in Algorithm \ref{algo:find_lambda}
\begin{algorithm}
  \caption{FIND\_LAMBDA}\label{algo:find_lambda}
  \textbf{Input: } Support $\S$, iterate $\v = \com{\xs}{-\muc}$, $\mu_{0}$, intermediate variables $\Par_{1}, \Par_{2}$.

\textbf{Procedure: }
\begin{algorithmic}[1]
  \STATE $u\gets (\Dg\mu_{0} - \Dgy)(\Dg\teta - \D\eta)$;
  \STATE $\alpha\gets \mathrm{min}_{+}\left\{\frac{\D \v_{i}}{\Dg^{2}\v_{i} - u_{i}}: i = 1, 2, \ldots, n\right\}$;
  \STATE $j \gets \mathrm{argmin}_{+}\left\{\frac{\D \v_{i}}{\Dg^{2}\v_{i} - u_{i}}: i = 1, 2, \ldots, n\right\}$;
  \IF {$\alpha\Dgg < 1$}
  \STATE $\lam^{\mathrm{inc}}\gets \frac{\alpha}{1 - \alpha \Dgg}$;
  \ELSE 
  \STATE $\lam^{\mathrm{inc}}\gets \infty$; 
  \ENDIF
  \IF {$j\in \S$}
  \STATE $\S^{\mathrm{new}} = \S \setminus \{j\}$;
  \ELSE
  \STATE $\S^{\mathrm{new}} = \S\cup \{j\}$.
  \ENDIF
\end{algorithmic}

\textbf{Output: }$\lam^{\mathrm{inc}}, j, \S^{\mathrm{new}}$.
\end{algorithm}

\subsection{Variables Update}
\subsubsection{\textnormal{UPDATE\_BY\_LAMBDA}}
Once the next $\lam$ has been calculated, all variables can be updated via Lemma \ref{lem:var_lambda} and Theorem \ref{thm:var_lambda}.
\begin{algorithm}
\caption{UPDATE\_BY\_LAMBDA}\label{algo:update_by_lambda}
\textbf{Input: } Increment $\lam^{\mathrm{inc}}$; iterate $\v = \com{\xs}{-\muc}$, $\mu_{0}$; intermediate variables $\Par_{1}, \Par_{2}$.

\textbf{Procedure: }
\begin{algorithmic}[1]
  \STATE $\alpha_{0}\gets \frac{1}{1 + \lam^{\mathrm{inc}}\cdot\Dgg}$;
  \STATE $\alpha\gets \lam^{\mathrm{inc}}\cdot\alpha_{0}$;
  \STATE $\td{\alpha}\gets \frac{\alpha}{\D - \alpha\Dg^{2}}$;
  \STATE $\v\gets \v + \td{\alpha}\cdot (\Dg\mu_{0} - \Dgy)(\Dg\teta - \D\eta)$;
  \STATE $\mu_{0}\gets \mu_{0} + \td{\alpha}\cdot \Dg(\Dg\mu_{0} - \Dgy)$;
  \STATE $\D \gets \D - \alpha \Dg^{2}$;
  \STATE $(\Dg, \Dgg, \Dgy) \gets \alpha_{0} (\Dg, \Dgg, \Dgy)$.
  \STATE $\Ms\gets \Ms - \alpha \eta\etas^{T}$;
  \STATE $\teta\gets \teta - \alpha \Dg\eta$;
  \STATE $\eta\gets \alpha_{0}\eta$;
\end{algorithmic}

\textbf{Output: }$\v, \mu_{0}, \Par_{1}, \Par_{2}$.
\end{algorithm}

\subsubsection{\textnormal{UPDATE\_EXPAND\_SUPPORT}}
Suppose $S$ is updated to $S \cup \{j\}$ for some $j\in S^{c}$. Denote $\T$ by $S\cup \{j\}$ and we add a supscript $+$ to each variable to denote the value after update. The key tool is the following formula showing the relation between matrix inverses after adding one row and one column. 
\begin{proposition}\label{prop:block_add}
Let $\tAjj = \Ajj - \Ajs\Ass^{-1}\Asj$,
 \[\Att^{-1} = \mat{\Ass^{-1}}{0}{0}{0} + \frac{1}{\tAjj}\cdot \com{-\Ass^{-1}\Asj}{1}\rcom{-\Ajs\Ass^{-1}}{1}.\]
\end{proposition}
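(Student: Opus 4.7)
This is the standard $2\times 2$ Schur-complement block-inverse formula, specialized to the setting where $j$ is a single coordinate so that $\Ajj$ and $\tAjj = \Ajj - \Ajs\Ass^{-1}\Asj$ are scalars. My plan is to prove it by directly verifying that the right-hand side, call it $R$, satisfies $\Att R = I$; since $\Att$ is invertible (as $\Ass$ is invertible and $\tAjj \neq 0$), this forces $R = \Att^{-1}$.

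\textbf{Key steps.} Write $\Att$ in the block form $\mat{\Ass}{\Asj}{\Ajs}{\Ajj}$ with the $\S$-indices preceding $j$, and compute $\Att R$ block-row by block-row. For the top block-row $\rcom{\Ass}{\Asj}$, the diagonal piece $\mat{\Ass^{-1}}{0}{0}{0}$ contributes $\rcom{I}{0}$, and the scalar coefficient arising from $\rcom{\Ass}{\Asj}\com{-\Ass^{-1}\Asj}{1}$ equals $-\Asj + \Asj = 0$, so the rank-one correction vanishes. For the bottom block-row $\rcom{\Ajs}{\Ajj}$, the diagonal piece yields $\rcom{\Ajs\Ass^{-1}}{0}$; the scalar coefficient of the rank-one correction is exactly $\Ajj - \Ajs\Ass^{-1}\Asj = \tAjj$, which cancels the $\tAjj^{-1}$ factor and contributes $\rcom{-\Ajs\Ass^{-1}}{1}$. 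Adding the two contributions in the bottom row gives $\rcom{0}{1}$, and combining both block-rows gives the identity.

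\textbf{Main obstacle.} There is essentially no conceptual obstacle; this is textbook block inversion, and the substantive input is only the definition of the Schur complement together with the scalar nature of $\tAjj$. The main things to be careful about are (i) the implicit requirement that $\tAjj \neq 0$, equivalent to $\Att$ being invertible given that $\Ass$ is, and (ii) the fact that the block decomposition silently permutes rows and columns so that $\S$-indices precede $j$; since both sides of the identity transform the same way under such a permutation, this does not affect correctness. Once these bookkeeping points are noted, the proof is the two-line block multiplication above.
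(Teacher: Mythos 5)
Your verification is correct: multiplying $\Att$ against the proposed right-hand side block-row by block-row does yield the identity, the rank-one correction's top block vanishes because $-\Asj+\Asj=0$, and the bottom block produces exactly the Schur complement $\tAjj$ that cancels the $\tAjj^{-1}$ prefactor. The paper states Proposition \ref{prop:block_add} without proof, treating it as the textbook Schur-complement block-inversion formula, so there is no internal argument to compare against; your two bookkeeping caveats (that $\tAjj\neq 0$ is needed, equivalently that $\Att$ is invertible given $\Ass$ is, and that the simultaneous row-and-column permutation placing $\S$ before $j$ leaves the identity unaffected) are exactly the right points to make explicit.
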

Similar to section 4.1, the key is to update $\M$ and other variables are easy to update based on $\M$. Denote a class of operator $\{\opr: j\in S^{c}\}$ for matrix $W\in\R^{n\times n}$, $\opr(W)$ sets the $j$-th row and $j$-th column of $W$ to be zero and for vector $z\in\R^{n\times 1}$, $\opr(z)$ sets the $j$-th coordinate of $z$ to be zero. One property of $\opr$ to be used is that For any matrix-vector pair $(W, z)$, 
\begin{equation}
  \label{eq:opr}
  \opr(W)z = \opr(Wz) - z_{j}\opr(W_{j})
\end{equation}
where $W_{j}$ is $j$-th column of $W$.
\begin{theorem}\label{thm:expand_M}
Let $\gamma$ and $\td{\gamma}$ be two $n\times 1$ vectors with
\[\gamma_{\T} = \td{\gamma}_{\T} = \rcom{\Mjs}{1}^{T}, \quad \gamma_{\T^{c}} = -\ACj - \ACs\Mjs^{T}, \quad \td{\gamma}_{\T^{c}} = 0.\]
Then 
  \[\M^{+} = \opr(\M) + \frac{1}{\tAjj}\cdot\gamma\td{\gamma}^{T}.\]
\end{theorem}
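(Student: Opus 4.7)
The strategy is to verify the claimed formula block by block, based on the three defining identities $\Mtt^{+} = \Att^{-1}$, $\MCt^{+} = -\ACt\Att^{-1}$, and $\M^{+}_{\bigcdot,\T^{c}} = 0$. The algebraic engine is Proposition \ref{prop:block_add}. Since $j\in \S^{c}$, the definition $\Mcs = -\Acs\Ass^{-1}$ gives $\Mjs = -\Ajs\Ass^{-1}$ and hence $\Mjs^{T} = -\Ass^{-1}\Asj$. Substituting this identity into Proposition \ref{prop:block_add} rewrites it in the convenient form
\[\Att^{-1} = \mat{\Ass^{-1}}{0}{0}{0} + \frac{1}{\tAjj}\com{\Mjs^{T}}{1}\rcom{\Mjs}{1}.\]

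For the $\T\T$-block, I would match this against $\opr(\M)_{\T\T} + \gamma_{\T}\td{\gamma}_{\T}^{T}/\tAjj$. The $\S\times \S$ sub-block of $\opr(\M)$ equals $\Mss = \Ass^{-1}$ since $j\notin \S$ leaves this sub-block untouched by $\opr$, while the $j$-th row and $j$-th column within the $\T\T$-block are zeroed by $\opr$; hence $\opr(\M)_{\T\T} = \mat{\Ass^{-1}}{0}{0}{0}$. Combined with $\gamma_{\T}\td{\gamma}_{\T}^{T} = \com{\Mjs^{T}}{1}\rcom{\Mjs}{1}$, the sum reproduces the displayed formula for $\Att^{-1}$.

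For the $\T^{c}\T$-block, I would expand $-\ACt\Att^{-1}$ using the displayed form above, splitting the target columns into the $\S$-block and the $j$-th column. The $\S$-columns give $-\ACs\Ass^{-1} + (-\ACj - \ACs\Mjs^{T})\Mjs/\tAjj$, and the $j$-th column gives $(-\ACj - \ACs\Mjs^{T})/\tAjj$. The factor $-\ACj - \ACs\Mjs^{T}$ is exactly $\gamma_{\T^{c}}$, and $-\ACs\Ass^{-1} = \MCs$, which equals $\opr(\M)_{\T^{c}\S}$, while $\opr(\M)_{\T^{c}, j} = 0$. Collecting terms yields precisely $\opr(\M)_{\T^{c}\T} + \gamma_{\T^{c}}\td{\gamma}_{\T}^{T}/\tAjj$, as required.

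For the $\bigcdot,\T^{c}$-block, both terms on the RHS vanish: the rank-one correction is killed by $\td{\gamma}_{\T^{c}} = 0$, while $\opr(\M)_{\bigcdot,\T^{c}} = 0$ because $\T^{c}\subseteq \S^{c}$ forces $\M_{\bigcdot,\T^{c}} = 0$ from the original definition of $M$ and the operator $\opr$ only affects the $j$-th row and column (with $j\notin \T^{c}$). The only delicate step is the second block, where the cross-terms generated by the rank-one piece of $\Att^{-1}$ must be regrouped using $\Mjs^{T} = -\Ass^{-1}\Asj$ to coalesce $\ACs$ and $\ACj$ into the single vector $\gamma_{\T^{c}}$; the remaining verifications are direct block extractions.
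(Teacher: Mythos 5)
Your proof is correct and follows essentially the same route as the paper's: apply Proposition \ref{prop:block_add} (rewritten via $\Mjs = -\Ajs\Ass^{-1}$, so that $-\Ass^{-1}\Asj = \Mjs^{T}$) and verify the identity block by block, obtaining the $\T^{c}\T$ block by multiplying the block-inverse formula on the left by $-\rcom{\ACs}{\ACj}$ and noting that $\M_{\bigcdot,\T^{c}}$ vanishes. The only difference is expository: you spell out why $\opr(\M)$ produces exactly the required zero patterns in each block, which the paper leaves implicit.
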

\begin{proof}
By definition, 
\[\Mtt^{+} = \Att^{-1}, \quad \MCt^{+} = -\ACt\Att^{-1}.\]  
By Proposition \ref{prop:block_add},
\[\Mtt^{+} = \mat{\Mss}{0}{0}{0} + \frac{1}{\tAjj}\com{\Mjs^{T}}{1}\rcom{\Mjs}{1} = \mat{\Mss}{0}{0}{0} + \frac{1}{\tAjj}\gamma_{\T}\td{\gamma}_{\T}^{T},\]
and 
\begin{align*}
 \MCt^{+} &= -\rcom{\ACs}{\ACj}\left\{\mat{\Mss}{0}{0}{0} + \frac{1}{\tAjj}\com{\Mjs^{T}}{1}\rcom{\Mjs}{1}\right\}\\
& = \rcom{\MCs}{0} + \frac{1}{\tAjj}\gamma_{\T^{c}}\td{\gamma}_{\T}^{T}.
\end{align*}
Note that $\MC$ is always a zero matrix by definition, the above results imply that
\[\M^{+} = \opr(\M) + \frac{1}{\tAjj}\cdot\gamma\td{\gamma}^{T}.\]
\end{proof}

The update of other parameters can be derived as a consequence of Theorem \ref{thm:expand_M}. Theorem \ref{thm:expand_other} summarizes the result.

\begin{theorem}\label{thm:expand_other}
Let $b_{j, \S} = - \c_{\T}^{T}\gamma_{\T}$, then
  \begin{itemize}
  \item $\eta^{+} = \opr(\eta) + \frac{\eta_{j}}{\tAjj}\gamma$;
  \item $\teta^{+} = \opr(\teta) + \frac{\td{\eta}_{j}}{\tAjj}\gamma$;
  \item $\D^{+} = \D + \frac{\td{\eta}_{j}^{2}}{\tAjj}$
  \item $\Dg^{+} = \Dg + \frac{\eta_{j}\td{\eta}_{j}}{\tAjj}$;
  \item $\Dgg^{+} = \Dgg + \frac{\eta_{j}^{2}}{\tAjj}$;
  \item $\Dgy^{+} = \Dgy + \frac{\eta_{j}b_{j, \S}}{\tAjj}$;
  \end{itemize}
\end{theorem}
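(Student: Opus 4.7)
The plan is to reduce every statement to a direct consequence of Theorem \ref{thm:expand_M}, which already gives the explicit rank-one update formula
\[M^{+} = \opr(M) + \frac{1}{\tAjj}\,\gamma\td{\gamma}^{T}.\]
Each of the quantities $\eta, \teta, D, D_g, D_{gg}, D_{gy}$ is, by definition, a simple bilinear form in $M$ evaluated at the vectors $\g$, $\one$, or $\c$ restricted to the current support. So once $M^{+}$ is expressed in terms of $M$, the six update formulas should fall out by taking the appropriate inner products.

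The first step is to set up the two basic inner products that will reappear throughout. Using $\td{\gamma}_{T} = (M_{jS},\,1)^{T}$ and the identities $M_{jS} = -A_{jS}A_{SS}^{-1}$, $\eta_{S^{c}} = \g_{S^{c}} + M_{S^{c}S}\g_{S}$ and $\teta_{S^{c}} = \one_{S^{c}} + M_{S^{c}S}\one_{S}$, I compute
\[\td{\gamma}_{T}^{T}\g_{T} = M_{jS}\g_{S} + \g_{j} = \eta_{j}, \qquad \td{\gamma}_{T}^{T}\one_{T} = M_{jS}\one_{S} + 1 = \td{\eta}_{j},\]
and analogously $\gamma_{T}^{T}\g_{T} = \eta_{j}$ and $\gamma_{T}^{T}\one_{T} = \td{\eta}_{j}$ (since $\gamma$ and $\td{\gamma}$ agree on $T$). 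These two scalars are the only ``new'' information needed.

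Next I handle the vectors $\eta^{+}$ and $\teta^{+}$. Writing $\eta^{+}_{T} = M_{TT}^{+}\g_{T}$ and $\eta^{+}_{T^{c}} = \g_{T^{c}} + M_{T^{c}T}^{+}\g_{T}$, plugging in the formula for $M^{+}$ and using that $\opr(M)$ agrees with $M$ off the $j$-th row/column, the ``$\opr$'' part reproduces $\eta$ with its $j$-th coordinate zeroed out (i.e.\ $\opr(\eta)$), and the rank-one part contributes $\frac{\td{\gamma}_{T}^{T}\g_{T}}{\tAjj}\,\gamma = \frac{\eta_{j}}{\tAjj}\,\gamma$. The identical computation with $\one$ in place of $\g$ gives the $\teta^{+}$ formula.

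The scalar updates are then immediate: $D^{+} = \one_{T}^{T}M_{TT}^{+}\one_{T}$, $D_{g}^{+} = \one_{T}^{T}M_{TT}^{+}\g_{T}$, $D_{gg}^{+} = \g_{T}^{T}M_{TT}^{+}\g_{T}$, and $D_{gy}^{+} = -\g_{T}^{T}M_{TT}^{+}\c_{T}$ all split into an ``$\opr(M)$'' piece that restricts to $M_{SS}$ and so recovers the unupdated scalar $D,D_{g},D_{gg},D_{gy}$, plus a rank-one piece of the form $\frac{(\,\cdot\gamma_{T})(\td{\gamma}_{T}\cdot\,)}{\tAjj}$ which, by the two identities above, equals $\frac{\td{\eta}_{j}^{2}}{\tAjj}$, $\frac{\eta_{j}\td{\eta}_{j}}{\tAjj}$, $\frac{\eta_{j}^{2}}{\tAjj}$, and $\frac{\eta_{j}b_{j,S}}{\tAjj}$ respectively, where the last uses $b_{j,S} = -\c_{T}^{T}\gamma_{T}$ by definition. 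I do not expect any real obstacle here; the only bookkeeping care is distinguishing $\gamma$ from $\td{\gamma}$ (they differ only on $T^{c}$, which is irrelevant for any of the inner products in this proof since all of $\g_{T^{c}}, \one_{T^{c}}, \c_{T^{c}}$ never appear), and noting that $\opr(M)$ restricted to $T\times T$ is the block $\mathrm{diag}(M_{SS},0)$ so that it really does recover the pre-update scalars exactly.
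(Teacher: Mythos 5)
Your proposal is correct and follows essentially the same route as the paper: both reduce everything to the rank-one update of $\M$ from Theorem \ref{thm:expand_M} and the two key inner products $\td{\gamma}_{\T}^{T}\gt = \eta_{j}$ and $\td{\gamma}_{\T}^{T}\onet = \teta_{j}$. The only cosmetic difference is that the paper derives the scalar updates from the already-updated vectors (e.g.\ $\D^{+} = \onet^{T}\teta_{\T}^{+}$) whereas you evaluate the bilinear forms in $\Mtt^{+}$ directly, which changes nothing of substance.
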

\begin{proof}
 Since $\M_{j} = 0$, (\ref{eq:opr}) implies that for any $z\in\R^{n\times 1}$
\[\opr(\M z) = \opr(\M)z.\]
By definition,
\[\eta = \com{0}{\gc} + \M \g, \quad \teta = \com{0}{\onec} + \M\one.\]
Also notice that $\td{\gamma}_{\T}^{T}\gt = \g_{j} + \Mjs^{T}\gs = \eta_{j}$ and $\gamma_{\T}^{T}\onet = 1 + \Mjs\ones = \teta_{j}$, thus, 
\begin{align*}
\eta^{+} &= \com{0}{\gC} + \M^{+}g = \com{0}{\gC} + \opr(M) \g + \frac{\td{\gamma}^{T}\g}{\tAjj}\gamma\\
& = \com{0}{\gC} + \opr(M\g) + \frac{\td{\gamma}_{\T}^{T}\gt}{\tAjj}\gamma\\
& = \com{0}{\gC} + \opr(\eta) - \opr\lb\com{0}{\gc}\rb + \frac{\eta_{j}}{\tAjj}\gamma\\
& = \com{0}{\gC} + \opr(\eta) - \com{0}{\gC} + \frac{\eta_{j}}{\tAjj}\gamma\\
& = \opr(\eta) + \frac{\eta_{j}}{\tAjj}\gamma.
\end{align*}
The update of $\teta$ can be obtained by replacing $g$ by $\one$ in the above derivation. The four scalars $\D, \Dg, \Dgg, \Dgy$ can be updated as follows.
\begin{align*}
\D^{+} &= \onet^{T}\teta_{\T}^{+} = \onet^{T}\lb\opr(\teta)_{\T} + \frac{\td{\eta}_{j}}{\tAjj}\gamma_{\T}\rb = \D + \frac{\td{\eta}_{j}^{2}}{\tAjj};\\
\Dg^{+} &= \onet^{T}\eta_{\T}^{+} = \onet^{T}\lb\opr(\eta)_{\T} + \frac{\eta_{j}}{\tAjj}\gamma_{\T}\rb = \Dg + \frac{\td{\eta}_{j}\eta_{j}}{\tAjj};\\
\Dgg^{+} &= \gt^{T}\eta_{\T}^{+} = \gt^{T}\lb\opr(\eta)_{\T} + \frac{\eta_{j}}{\tAjj}\gamma_{\T}\rb = \Dgg + \frac{\eta_{j}^{2}}{\tAjj};\\
\Dgy^{+} &=  - \c_{\T}^{T}\eta_{\T}^{+} =  - \c_{\T}^{T}\lb\opr(\eta_{\T}) + \frac{\eta_{j}}{\tAjj}\gamma_{\T}\rb = \Dgy - \frac{\eta_{j}}{\tAjj}\c_{\T}^{T}\gamma_{\T}\\
& = \Dgy + \frac{\eta_{j}b_{j, \S}}{\tAjj}.
\end{align*}
\end{proof}

The implementation of UPDATE\_EXPAND\_SUPPORT is summarized in Algorithm \ref{algo:update_expand_support}. Note that both $\tAjj$ and $\gamma_{\T^{c}}$ depends on $\lam$ and it is easy to see that 
\begin{align*}
  &\tAjj(\lam) = \Ajj + \lam \g_{j}^{2} + \Mjs(\Asj + \lam \g_{j}\gs) = \Ajj + \Mjs\Asj + \lam \g_{j}\eta_{j}\\
  & \gamma_{\T^{c}}(\lam)\gets - (\A_{\T^{c}j} + \lam \g_{\T^{c}}\g_{j}) - (\A_{\T^{c}\S} + \lam \g_{\T^{c}}\gs^{T})\Mjs^{T} = -\A_{\T^{c}j} - \A_{\T^{c}\S}\Mjs^{T} - \lam\eta_{j}\g_{\T^{c}}.
\end{align*}

\begin{algorithm}
  \caption{UPDATE\_EXPAND\_SUPPORT}\label{algo:update_expand_support}
  \textbf{Inputs: } Current $\lam$, original support $S$, new index $j$, matrix $A$, vectors $\y, \c, \g$, intermediate variables $\Par_{1}, \Par_{2}$.

\textbf{Procedure: }
\begin{algorithmic}[1]
  \STATE $\tAjj\gets \Ajj + \Mjs\Asj + \lam \g_{j}\eta_{j}$;
  \STATE $\gamma_{\T}\gets (\Mjs, 1)^{T}, \gamma_{\T^{c}}\gets -\A_{\T^{c}j} - \A_{\T^{c}\S}\Mjs^{T} - \lam\eta_{j}\g_{\T^{c}}$;
  \STATE $\td{\gamma}_{\T}\gets (\Mjs, 1)^{T}, \td{\gamma}_{\T^{c}}\gets 0$;
  \STATE $b\gets  - \c_{\T}^{T}\gamma_{\T}$;
  \STATE $\D\gets \D + \frac{\teta_{j}^{2}}{\tAjj}$;
  \STATE $\Dg\gets \Dg + \frac{\eta_{j}\teta_{j}}{\tAjj}$;
  \STATE $\Dgg\gets \Dgg + \frac{\eta_{j}^{2}}{\tAjj}$;
  \STATE $\Dgy\gets \Dgy + \frac{\eta_{j}b}{\tAjj}$;
  \STATE $\M_{\bigcdot, \T}\gets \opr(\M_{\bigcdot, \T}) + \frac{1}{\tAjj}\gamma\td{\gamma}_{\T}^{T}$;
  \STATE $\eta\gets \opr(\eta) + \frac{\eta_{j}}{\tAjj}\gamma$.
  \STATE $\teta\gets \opr(\teta) + \frac{\teta_{j}}{\tAjj}\gamma$;
\end{algorithmic}

\textbf{Output: }$\Par_{1}, \Par_{2}$.
\end{algorithm}

\subsubsection{\textnormal{UPDATE\_SHRINK\_SUPPORT}}
Suppose $S$ is updated to $S \setminus \{j\}$ for some $j\in S^{c}$. Denote $\T$ by $S\setminus \{j\}$ and we add a supscript $-$ to each variable to denote the value after update. Similar to last subsection, we start from deriving $\M^{-}$ and apply the result to calculate other variables.

\begin{theorem}\label{thm:shrink_M}
Let $\beta$ and $\td{\beta}$ be two $n\times 1$ vectors with
\[\beta_{\T} = \td{\beta}_{\T} = \Mtj, \quad \beta_{\T^{c}} = \com{-1}{\Mcj}\quad \td{\beta}_{\T^{c}} = 0.\]
Then 
  \[\M^{-} = \opr(\M) - \frac{1}{\Mjj}\cdot\beta\td{\beta}^{T}.\]
\end{theorem}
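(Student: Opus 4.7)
The plan is to mirror Theorem \ref{thm:expand_M} by running the block-inverse formula of Proposition \ref{prop:block_add} in reverse. We now view $A_{SS}$ as being obtained by adjoining the row and column indexed by $j$ to $A_{TT}$, so the same identity yields formulas for the blocks of $A_{TT}^{-1}$ in terms of the blocks of the known inverse $M_{SS}=A_{SS}^{-1}$.

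Concretely, decomposing $M_{SS}$ into blocks indexed by $T$ and $\{j\}$, the standard block-inverse identities give
\begin{align*}
M_{jj} &= \bigl(A_{jj}-A_{jT}A_{TT}^{-1}A_{Tj}\bigr)^{-1},\quad M_{Tj}=-A_{TT}^{-1}A_{Tj}M_{jj},\\
M_{TT} &= A_{TT}^{-1}+A_{TT}^{-1}A_{Tj}M_{jj}A_{jT}A_{TT}^{-1},
\end{align*}
and rearranging these yields the three key identities
\[A_{TT}^{-1}=M_{TT}-\tfrac{1}{M_{jj}}M_{Tj}M_{jT},\qquad A_{TT}^{-1}A_{Tj}=-\tfrac{1}{M_{jj}}M_{Tj},\qquad A_{jT}A_{TT}^{-1}=-\tfrac{1}{M_{jj}}M_{jT}.\]

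Next I would compute each block of $M^-$ from its defining relations $M^-_{TT}=A_{TT}^{-1}$ and $M^-_{T^c T}=-A_{T^c T}A_{TT}^{-1}$. The $T$-row block is immediate from the first identity above. For the new row $j$ of $M^-_{T^c T}$, the third identity gives $M^-_{jT}=-A_{jT}A_{TT}^{-1}=M_{jT}/M_{jj}$. For the $S^c$ rows, I would read off from $M_{S^c S}=-A_{S^c S}A_{SS}^{-1}$ the two relations $M_{S^c j}=-A_{S^c T}M_{Tj}-A_{S^c j}M_{jj}$ and $M_{S^c T}=-A_{S^c T}M_{TT}-A_{S^c j}M_{jT}$, then combine them with $A_{TT}^{-1}=M_{TT}-M_{Tj}M_{jT}/M_{jj}$ so that the unknown products $A_{S^c T}M_{Tj}$ and $A_{S^c j}M_{jj}$ cancel, leaving $M^-_{S^c T}=M_{S^c T}-M_{S^c j}M_{jT}/M_{jj}$. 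For the $T^c$ columns, $M^-_{\cdot,T^c}=0$ by the definition of $M$, and both $\mathcal{R}_j(M)_{\cdot,T^c}$ and $\beta\tilde\beta_{T^c}^T$ vanish there (the former because $M_{\cdot,S^c}=0$ and column $j$ is zeroed by $\mathcal{R}_j$, the latter because $\tilde\beta_{T^c}=0$), so the identity is trivial on $T^c$ columns.

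Finally, I would assemble: the three corrections above are precisely the $T$-column entries of $-\tfrac{1}{M_{jj}}\beta\tilde\beta^T$ with $\beta_T=M_{Tj}$, $\beta_j=-1$, $\beta_{S^c}=M_{S^c j}$ and $\tilde\beta_T=M_{Tj}$ (using the symmetry $M_{jT}=M_{Tj}^T$ that follows from symmetry of $A$), which matches the statement. The main obstacle is the $S^c$-row block, since $M^-_{S^c T}$ is not a subblock of the stored $M_{S^c S}$: expressing it purely in terms of $M$-entries requires combining two different column identities from $M_{S^c S}=-A_{S^c S}A_{SS}^{-1}$ to eliminate the unknown quantities $A_{S^c T}$ and $A_{S^c j}$. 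The remaining three blocks are then just direct substitutions of the block-inverse identities.
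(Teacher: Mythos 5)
Your proposal is correct and follows essentially the same route as the paper's proof: both apply the block-inverse identity of Proposition \ref{prop:block_add} to the partition of $\Ass$ into $\T$ and $\{j\}$ to extract $\Att^{-1}=\Mtt-\Mtj\Mjt/\Mjj$ and $-\Ajt\Att^{-1}=\Mjt/\Mjj$, and then handle the $\S^{c}$ rows exactly as you describe, by combining the two column relations of $\Mcs=-\Acs\Ass^{-1}$ so that the unknown blocks $\Act$ and $\Acj$ cancel, leaving $\Mct-\Mcj\Mjt/\Mjj$. The assembly into $\opr(\M)-\frac{1}{\Mjj}\beta\td{\beta}^{T}$ and the trivial treatment of the $\T^{c}$ columns also match the paper.
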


\begin{proof}
  By definition,
\[\mat{\Mtt}{\Mtj}{\Mjt}{\Mjj} = \Ass^{-1} = \mat{\Att}{\Atj}{\Ajt}{\Ajj}^{-1}.\]
Then Proposition \ref{prop:block_add} implies that 
\begin{equation}\label{eq:Mtt}
\mat{\Mtt}{\Mtj}{\Mjt}{\Mjj} = \mat{\Att^{-1}}{0}{0}{0} + \frac{1}{\Ajj - \Ajt\Att^{-1}\Atj}\cdot \com{-\Att^{-1}\Atj}{1}\rcom{-\Ajt\Att^{-1}}{1}.
\end{equation}
This entails that 
\begin{equation}\label{eq:first_row}
\Att^{-1} = \Mtt - \frac{\Mtj\Mjt}{\Mjj}, \quad -\Ajt\Att^{-1} = \frac{\Mjt}{\Mjj}.
\end{equation}
On the other hand,
\begin{align}
\rcom{\Mct}{\Mcj} &= -\Acs\Ass^{-1} = -\rcom{\Act}{\Acj}\mat{\Mtt}{\Mtj}{\Mjt}{\Mjj}\nonumber\\ 
&= -\rcom{\Act\Mtt + \Acj\Mjt}{\Act\Mtj + \Acj\Mjj}.\label{eq:last_row}
\end{align}
If follows from (\ref{eq:Mtt}), \eqref{eq:first_row} and (\ref{eq:last_row}) that 
\begin{align}
-\Act\Att^{-1} &= -\Act\lb\Mtt - \frac{\Mtj\Mjt}{\Mjj}\rb\nonumber\\
& = \Mct + \Acj\Mjt + \frac{\Act\Mtj\Mjt}{\Mjj}\nonumber\\
& = \Mct + (\Acj \Mjj + \Act \Mtj)\frac{\Mjt}{\Mjj}\nonumber\\
& = \Mct - \frac{\Mcj\Mjt}{\Mjj}.
\end{align}
Putting (\ref{eq:first_row}) and (\ref{eq:last_row}) together, we obtain that
\[\M_{\bigcdot, \T}^{-} = \com{\Att^{-1}}{-\ACt\Att^{-1}} = \lb
\begin{array}{c}
  \Att^{-1} \\ -\Ajt\Att^{-1} \\ -\Act\Att^{-1}
\end{array}
\rb = \opr(\M)_{\bigcdot, \T} - \frac{1}{\Mjj}\cdot\beta\td{\beta}_{\T}^{T}.\]
Since $\M_{\bigcdot, \T^{c}}$ is a zero matrix, 
\[\M^{-} = \opr(\M) - \frac{1}{\Mjj}\cdot\beta\td{\beta}^{T}.\]
\end{proof}

\begin{algorithm}
  \caption{UPDATE\_SHRINK\_SUPPORT}\label{algo:update_shrink_support}
  \textbf{Inputs: } Original support $S$, new index $j$, matrix $A$, vector $\y, \c, \g$, intermediate variables $\Par_{1}, \Par_{2}$.

\textbf{Procedure: }
\begin{algorithmic}[1]
  \STATE $\beta_{\T}\gets \Mjt^{T}, \beta_{\T^{c}}\gets \com{-1}{\MCj}, \td{\beta}_{\T}\gets \Mjt^{T}, \td{\beta}_{\T^{c}}\gets 0$;
  \STATE $\td{b}\gets  - \c_{\T}^{T}\beta_{\T} - \c_{j}\Mjj$;
  \STATE $\D\gets \D - \frac{\teta_{j}^{2}}{\Mjj}$;
  \STATE $\Dg\gets \Dg - \frac{\eta_{j}\teta_{j}}{\Mjj}$;
  \STATE $\Dgg\gets \Dgg - \frac{\eta_{j}^{2}}{\Mjj}$;
  \STATE $\Dgy\gets \Dgy - \frac{\eta_{j}\td{b}}{\Mjj}$;
  \STATE $\M_{\bigcdot, \T}\gets \opr(\M_{\bigcdot, \T}) - \frac{1}{\Mjj}\beta\td{\beta}_{\T}^{T}, \,\,\,\M_{\bigcdot, j}\gets 0$;
  \STATE $\eta\gets \opr(\eta) - \frac{\eta_{j}}{\Mjj}\beta$;
  \STATE $\teta\gets \opr(\td{\eta}) - \frac{\teta_{j}}{\Mjj}\beta$.
\end{algorithmic}

\textbf{Output: }$\Par_{1}, \Par_{2}$.
\end{algorithm}

\begin{theorem}\label{thm:shrink_other}
Let $\td{b}_{j, \S} = - \c_{\T}^{T}\beta_{\T} - \c_{j}\Mjj$, then
  \begin{itemize}
  \item $\eta^{-} = \opr(\eta) - \frac{\eta_{j}}{\Mjj}\beta$;
  \item $\teta^{-} = \opr(\teta) - \frac{\teta_{j}}{\Mjj}\beta$;
  \item $\D^{-} = \D - \frac{\teta_{j}^{2}}{\Mjj}$;
  \item $\Dg^{-} = \Dg - \frac{\eta_{j}\teta_{j}}{\Mjj}$;
  \item $\Dgg^{-} = \Dgg - \frac{\eta_{j}^{2}}{\Mjj}$;
  \item $\Dgy^{-} = \Dgy - \frac{\eta_{j}\td{b}_{j, \S}}{\Mjj}$.
  \end{itemize}
\end{theorem}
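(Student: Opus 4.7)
The plan is to mirror the derivation used in the proof of Theorem \ref{thm:expand_other} on the expansion side, now starting from the shrink formula $\M^{-} = \opr(\M) - \tfrac{1}{\Mjj}\beta\td{\beta}^{T}$ supplied by Theorem \ref{thm:shrink_M}. First I would express each target quantity as $\M^{-}$ acting on either $\g$ or $\one$: since $\M^{-}_{\bigcdot, \T^{c}} = 0$, the definitions give $\eta^{-} = \com{0}{\gC} + \M^{-}\g$ and $\teta^{-} = \com{0}{\oneC} + \M^{-}\one$. Substituting the formula for $\M^{-}$ and invoking identity \eqref{eq:opr}, the $\opr(\M)\g$ term unfolds as $\opr(\M\g) - g_{j}\opr(\M_{\bigcdot, j})$, and analogously for $\one$.

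Next I would simplify the rank-one contribution $\tfrac{1}{\Mjj}\beta(\td{\beta}^{T}\g)$. Since $\td{\beta}_{\T^{c}} = 0$ and $\td{\beta}_{\T} = \Mjt^{T}$, we have $\td{\beta}^{T}\g = \Mjt \g_{\T}$, and this should combine with the $g_{j}\opr(\M_{\bigcdot, j})$ boundary contribution to reconstitute $\eta_{j} = \Mjs\gs$. After reconciling the migration of $j$ from $\S$ to $\T^{c}$ (which moves the $g_{j}$ term from the ``$\S$-block'' into the ``$\S^{c}$-block'' of $\eta^{-}$) against the $\opr(\M_{\bigcdot, j})$ piece, everything collapses into $\opr(\eta) - \tfrac{\eta_{j}}{\Mjj}\beta$, as claimed. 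The update for $\teta^{-}$ then follows by the identical argument with $\g$ replaced by $\one$.

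For the four scalars, I would use the representations $\D^{-} = \onet^{T}\teta^{-}_{\T}$, $\Dg^{-} = \onet^{T}\eta^{-}_{\T}$, $\Dgg^{-} = \g_{\T}^{T}\eta^{-}_{\T}$, $\Dgy^{-} = -\c_{\T}^{T}\eta^{-}_{\T}$. Restricting to $\T$ eliminates the boundary terms of $\eta^{-}, \teta^{-}$ and kills the $\opr$-action on the surviving block, so each scalar becomes its previous value (e.g.\ $\D = \ones^{T}\teta_{\S}$) minus a boundary piece at coordinate $j$ minus a rank-one correction of the form $\tfrac{1}{\Mjj}$ times the inner product of $\beta_{\T}$ with $\onet$, $\g_{\T}$ or $\c_{\T}$. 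The key observations are that $\onet^{T}\beta_{\T}$ relates to $\teta_{j}$ and $\g_{\T}^{T}\beta_{\T}$ relates to $\eta_{j}$ up to adjustments by $\Mjj$, and that the cost-vector term $\c_{\T}^{T}\beta_{\T}$ combined with the $\c_{j}\Mjj$ boundary contribution is exactly the quantity absorbed into the definition $\td{b}_{j, \S} = -\c_{\T}^{T}\beta_{\T} - \c_{j}\Mjj$.

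The main obstacle is the index bookkeeping surrounding $j$: since $j$ begins in $\S$ and ends in $\T^{c}$, terms involving $g_{j}$, $\one_{j}$ or $\c_{j}$ that sat in one block of $\eta, \teta, \Dgy$ before the update now live in the other block, and these migrations must be matched precisely against the contributions generated by $\opr(\cdot)$ together with the rank-one correction. Done carelessly, stray terms like $\c_{j}\Mjj$ will survive; done carefully, they are exactly what is packaged into $\td{b}_{j,\S}$. Once the boundary terms are reconciled, the algebra runs parallel to the expansion case already handled in Theorem \ref{thm:expand_other}.
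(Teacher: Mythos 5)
Your proposal follows essentially the same route as the paper's proof: starting from $\M^{-}$ in Theorem \ref{thm:shrink_M}, applying identity \eqref{eq:opr} to split off the $\g_{j}\opr(\M_{\bigcdot, j})$ boundary term, recombining it with the rank-one correction to reconstitute $\eta_{j} = \Mjs\gs$, and then restricting to $\T$ for the four scalars using $\onet^{T}\beta_{\T} = \teta_{j} - \Mjj$ and $\gt^{T}\beta_{\T} = \eta_{j} - \g_{j}\Mjj$. The bookkeeping you flag as the main obstacle (the migration of index $j$ from $\S$ into $\T^{c}$, and the absorption of $\c_{j}\Mjj$ into $\td{b}_{j,\S}$) is exactly what the paper's derivation carries out, so your plan is correct and matches it.
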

\begin{proof}
By (\ref{eq:opr}), 
\[\opr(\M)\g = \opr(\M\g) - \g_{j}\opr(\M_{\bigcdot, j})\]
Let $e_{j}$ is the $j$-th basis vector with $j$-th entry equal to 1 and all other entries equal to 0. Then
\begin{align*}
\eta^{-} &= \com{0}{\gC} + \M^{-}g = \com{0}{\gC} + \opr(M) \g - \frac{\td{\beta}^{T}\g}{\Mjj}\beta\\
& = \com{0}{\gC} + \opr(M\g) - \g_{j}\opr(\M_{\bigcdot, j}) - \frac{\td{\beta}_{\T}^{T}\gt}{\Mjj}\beta\\
& = \com{0}{\gC} + \opr(M\g) - \g_{j}e_{j} - \g_{j}\beta - \frac{\td{\beta}_{\T}^{T}\gt}{\Mjj}\beta\\
& = \com{0}{\gC} + \opr(M\g) - \g_{j}e_{j} - \frac{\Mjs\gs}{\Mjj}\beta\\
& = \com{0}{\gC} + \opr(\eta) - \opr\lb\com{0}{\gc}\rb - \g_{j}e_{j}- \frac{\eta_{j}}{\Mjj}\beta\\
& = \com{0}{\gC} + \opr(\eta) - \com{0}{\gc} - \g_{j}e_{j} - \frac{\eta_{j}}{\Mjj}\beta\\
& = \opr(\eta) - \frac{\eta_{j}}{\Mjj}\beta.
\end{align*}
 Substitute $g$ by $\one$, we obtain the update for $\teta$. Together with (\ref{eq:first_row}) and the fact that $j\in \S$, we obtain that
\begin{align*}
\D^{-} &= \onet^{T}\teta_{\T}^{-} = \onet^{T}\lb\teta_{\T} - \frac{\teta_{j}}{\Mjj}\beta_{\T}\rb = \D - \teta_{j} - \frac{\teta_{j}}{\Mjj}(\onet^{T}\Mtj)\\
& = \D - \frac{\teta_{j}}{\Mjj}(\Mjj + \onet^{T}\Mtj) = \D - \frac{\teta_{j}^{2}}{\Mjj};\\
\Dg^{-} &= \onet^{T}\eta_{\T}^{-} = \onet^{T}\lb\eta_{\T} - \frac{\eta_{j}}{\Mjj}\beta_{\T}\rb = \Dg - \eta_{j} - \frac{\eta_{j}}{\Mjj}(\onet^{T}\Mtj)\\
& = \Dg - \frac{\eta_{j}}{\Mjj}(\Mjj + \onet^{T}\Mtj) = \Dg - \frac{\eta_{j}\teta_{j}}{\Mjj};\\
\Dgg^{-} &= \gt^{T}\eta_{\T}^{-} = \gt^{T}\lb\eta_{\T} - \frac{\eta_{j}}{\Mjj}\beta_{\T}\rb = \Dgg - g_{j}\eta_{j} - \frac{\eta_{j}}{\Mjj}(\gt^{T}\Mtj)\\
& = \Dgg - \frac{\eta_{j}}{\Mjj}(g_{j}\Mjj + \gt^{T}\Mtj) = \Dgg - \frac{\eta_{j}^{2}}{\Mjj};\\
\Dgy^{-} &= - \c_{\T}^{T}\eta_{\T}^{-} =  - \c_{\T}^{T}\lb\eta_{\T} - \frac{\eta_{j}}{\Mjj}\beta_{\T}\rb =  - \cs^{T}\etas + \c_{j}\eta_{j} + \frac{\eta_{j}}{\Mjj}\c_{\T}^{T}\beta_{\T}\\
& = \Dgy - \frac{\eta_{j}\td{b}_{j, \S}}{\Mjj}.
\end{align*}
\end{proof}

The implementation of UPDATE\_SHRINK\_SUPPORT is summarized in Algorithm \ref{algo:update_shrink_support}.

\subsubsection{DIRECT\_UPDATE}
At the beginning of each time $t$, we need to recompute $\Par_{2} = \{\eta, \Dg, \Dgg, \Dgy\}$. The implementation is summarized in Algorithm \ref{algo:direct_update}.
\begin{algorithm}
  \caption{DIRECT\_UPDATE}\label{algo:direct_update}
  \textbf{Inputs: } Support $S$, vector $\y, \c, \g$, intermediate variables $\Par_{1}, \Par_{2}$. 

\textbf{Procedure: }

\begin{algorithmic}[1]
  \STATE $\etas \gets \Mss\gs, \etac\gets \gc + \Mcs\gs$;
  \STATE $\Dg\gets \ones^{T}\etas$;
  \STATE $\Dgg\gets \etas^{T}\gs$;
  \STATE $\Dgy\gets  - \etas\cs$.
\end{algorithmic}

\textbf{Output: }$\Par_{2}$.
\end{algorithm}

\subsection{Update of $A$}
As will be shown in next subsection, the complexities of all above sub-routines are at most $O(ns)$ where $s = |S|$. However, the complexity of line 16 is $O(n^{2})$ which might dominate  when the solution is sparse and the number of turning points is small. Fortunately, UPDATE\_EXPAND\_SUPPORT is the only sub-routine which extracts information from $A$. In fact, in line 1 and line 2,
\[\com{\tAjj}{\gamma_{\T^{c}}} = \com{\A_{jj} + \M_{jS}\A_{Sj}}{-\A_{\T^{c}j} - \A_{\T^{c}S}\M_{jS}^{T}} + \lambda\eta_{j}\com{\g_{j}}{\g_{\T^{c}}}.\]
This only requires the $j$-th column of $\A$. Let $\S_{*}$ be the union of all supports appeared in Algorithm \ref{algo:real_time_hop}. Suppose we know $\S_{*}$ apriori, we can only update the columns of $\A$ with indices in $\S_{*}$. In other words, we update $\A_{\bigcdot, \S_{*}}$ by $\A_{\bigcdot, \S_{*}} + \lambda \g\g_{\S_{*}}^{T}$ at the beginning of each step and hence the complexity is reduced to $O(n|\S_{*}|)$.

Although agnostic to $\S_{*}$ in reality, we can initialize it by $\supp(x_{k})$ for some positive $k$, e.g. $k = 1$, and keep track it by adding index into $\S_{*}$ once the index is not included in $\S_{*}$. Once a new index $j$ is detected, we update $j$-th column of $\A$ by using all previous $g^{(t)}$. The implementation is stated in Algorithm \ref{algo:real_time_hop_smart_A}. 
\begin{algorithm}
\caption{\OurMethod Algorithm for time-varying $\A$ and fixed $\c$ with sparse update of $\A$}\label{algo:real_time_hop_smart_A}
\textbf{Inputs: } Initial matrix $\A^{(0)}$, vectors $\c$, matrix-update-vectors $\{g^{(t)}, t = 1, 2, \ldots\}$.

\textbf{Initialization: }
\begin{algorithmic}
  \STATE $x\gets$ as the optimum corresponding to $A^{(0)}$;
  \STATE $\S\gets \supp(x), \S_{*}\gets \S$;
  \STATE Calculate $(\x, \mu, \mu_{0})$ via (\ref{eq:xs})-(\ref{eq:muzero})
  \STATE $v_{\S}\gets \xs, \quad v_{\S^{c}}\gets -\muc$;
  \STATE Calculate intermediate variables $(\Par_{1}, \Par_{2})$ via (\ref{eq:param_mat})-(\ref{eq:param_scalar}) based on $\g^{(1)}$.
\end{algorithmic}

\textbf{Procedure: }
\begin{algorithmic}[1]
  \FOR{$t = 1, 2, \cdots$.}
  \STATE $\lambda\gets 0$;
  \WHILE{$\lambda < 1$}
     \STATE $(\lam^{\mathrm{inc}}, j, S^{\mathrm{new}})\gets \mathrm{FIND\_LAMBDA}(S, \v; \Par_{1}, \Par_{2})$;
     \STATE $\lam^{\mathrm{inc}} \gets \min\{\lam^{\mathrm{inc}}, 1 - \lam\}$;
     \STATE $\lam\gets \lam + \lam^{\mathrm{inc}}$;
     \STATE $(\v, \mu_{0}; \Par_{1}, \Par_{2})\gets \mathrm{UPDATE\_BY\_LAMBDA}(\lam^{\mathrm{inc}}; \v, \mu_{0}; \Par_{1}, \Par_{2})$;
     \IF {$S^{\mathrm{new}} = S\cup \{j\}$}
        \STATE $(\Par_{1}, \Par_{2})\gets \mathrm{UPDATE\_EXPAND\_SUPPORT}(\lam, S, j; \A, \c, \g^{(t)}; \Par_{1}, \Par_{2})$;
        \IF {$j\not\in \S_{*}$}
        \STATE $G\gets (g^{(1)}, \ldots, g^{(t - 1)})$;
        \STATE $\A_{\bigcdot, j}\gets \A_{\bigcdot, j} + GG_{j, \bigcdot}^{T}$;
        \STATE $\S_{*} = \S_{*}\cup \{j\}$;
        \ENDIF
     \ELSIF {$S^{\mathrm{new}} = S\setminus \{j\}$}
        \STATE $(\Par_{1}, \Par_{2})\gets \mathrm{UPDATE\_SHRINK\_SUPPORT}(S, j; \c, \g^{(t)}; \Par_{1}, \Par_{2})$;
     \ENDIF
     \STATE $S\gets S^{\mathrm{new}}$.
  \ENDWHILE
  \STATE $\Par_{2}\gets \mathrm{DIRECT\_UPDATE}(S, \c, g^{(t + 1)}; \Par_{1}, \Par_{2})$;
  \STATE $A_{\bigcdot, \S_{*}}\gets A_{\bigcdot, \S_{*}} + g^{(t)}(g_{\S_{*}}^{(t)})^{T}$;
  \STATE $x_{\S}^{(t)}\gets \xs, \quad x_{\S^{c}}^{(t)} \gets 0$.
  \ENDFOR
\end{algorithmic}

\textbf{Output:} $x^{(1)}, x^{(2)}, \cdots$..
\end{algorithm}

\subsection{Complexity Analysis}
In this subsection, we analyze the complexity of the algorithm. We distinguish four types of computation, namely matrix-vector product, outer-product of two vectors, inner-product of two vectors and vector-scalar product. Denote by $W\in\R^{m\times p}$, $(z, \td{z})\in\R^{p}\times\R^{q}$ and $a\in \R$ the generic matrix, vector and scalar respectively. As a convention, the complexity is defined as the number of scalar-scalar multiplications. The addition is omitted here for simplicity. Note that the complexities of $Wz$, $z\td{z}^{T}$, $z^{T}z$ and $az$ are $mp$, $pq$, $p$ and $p$, respectively. The results for a single step are summarized in Table \ref{tab:complexity} where $s_{*} = |\S_{*}|$ be the size of $\S_{*}$ at the final round. We should emphasize that our complexity analysis is exact. 

\begin{table}[htp]
  \centering
  \caption{Computation complexity of each sub-routine in Algorithm \ref{algo:real_time_hop}.}\label{tab:complexity}
  \begin{tabular}{lllll}
    \toprule
    & ($Wz$)-type & ($z\td{z}^{T}$)-type & ($z^{T}z$)-type & ($az$)-type\\ 
    \midrule
    FIND\_LAMBDA & 0 & 0 & $n$ & $2n$\\
    UPDATE\_BY\_LAMBDA & 0 & $ns$ & 0 & $4n$\\
    UPDATE\_EXPAND\_SUPPORT & $s(n - s - 1)$ & $n(s + 1)$ & $n$ & $2n$\\
    UPDATE\_SHRINK\_SUPPORT & 0 & $n(s - 1)$ & $n$ & $2n$\\
    DIRECT\_UPDATE & $ns$ & 0 & $n + s$ & 0\\
    Update of $A$.   & 0 & $ns_{*}$ & 0 & 0\\
    \bottomrule
  \end{tabular}
\end{table}

For given $t$, denote $k_{\A}^{+}$ by the number of turning points which add element to $\S$ and $k_{\A}^{-}$ by the number of turning points which delete element from $\S$. Let $k_{\A} = k_{\A}^{+} + k_{\A}^{-}$ be the total number of turning points and $s$ be the maximum size of $S$ in the iteration Then the complexity of \OurMethod for a single time $t$ is at most
\[ns(3k_{\A}^{+} + 2k_{\A}^{-}) + n(12k_{\A}^{+} + 10k_{\A}^{-}) + O(k_{\A}),\]
Therefore, the complexity at time $t$ is at most
\[C_{1t} \le ns_{*} + ns(3k_{\A}^{+} + 2k_{\A}^{-} + 1) + n(12k_{\A}^{+} + 10k_{\A}^{-} + 2) + O(k_{\A}) \le ns_{*} + ns(3k_{\A} + 1) + n(12k_{\A} + 2) + O(k_{\A}).\]

\section{Implementation of \OurMethod Algorithm With Time-Varying $\c$ and Fixed $\A$}\label{app:varyc}

\subsection{Intermediate Variables}
Similar to Appendix \ref{app:varyA}, we define $\Par_{1} = \{\M, \teta, \D\}$ where the parameters are defined in \eqref{eq:param_mat}-\eqref{eq:param_scalar}. Moreover, we define a vector $\xi$ such that 
\[\xis = -\Ass^{-1}\ls, \quad \xic = -\lc + \Acs\Ass^{-1}\ls,\]
and a scalar $\Dl$ as 
\[\Dl = \ones^{T}\xis.\]
We write $\Par_{3}$ for $\{\xi, \Dl\}$ for convenience.

\subsection{Implementation}
Algorithm \ref{algo:real_time_hop_ulam} describes the full implementation in this case and the sub-routines will be discussed separately in following subsections.
\begin{algorithm}
\caption{\OurMethod Algorithm for constant $A, \y$ and time-varying $\c$}\label{algo:real_time_hop_ulam}
\textbf{Inputs: } Initial matrix $A$, vector $\c^{(0)}$, vector-update-vector $\{\l^{(t)} = \c^{(t)} - \c^{(t - 1)}: t = 1, 2, \ldots\}$.

\textbf{Initialization: }
\begin{algorithmic}
  \STATE $x\gets$ as the optimum corresponding to $\c^{(0)}$.
  \STATE $S\gets \supp(x)$;
  \STATE Calculate $(\x, \mu, \mu_{0})$ via (\ref{eq:xs})-(\ref{eq:muzero})
  \STATE $v_{\S} \gets \xs, v_{\S^{c}}\gets -\muc$;
  \STATE Calculate intermediate variables $\Par_{1}, \Par_{3}$ via (\ref{eq:param_mat})-(\ref{eq:param_scalar}) based on $\l^{(1)}$.
\end{algorithmic}

\textbf{Procedure: }
\begin{algorithmic}[1]
  \FOR{$t = 1, 2, \cdots$.}
  \STATE $\ulam\gets 0$;
  \WHILE{$\ulam < 1$}
     \STATE $(\ulam^{\mathrm{inc}}, j, S^{\mathrm{new}})\gets \mathrm{FIND\_UTILDE\_LAMBDA}(\S, \v; \Par_{1}, \Par_{3})$;
     \STATE $\ulam^{\mathrm{inc}} \gets \min\{\ulam^{\mathrm{inc}}, 1 - \ulam\}$;
     \STATE $(\v, \mu_{0}; \Par_{1}, \Par_{3})\gets \mathrm{UPDATE\_BY\_UTILDE\_LAMBDA}(\ulam^{\mathrm{inc}}; \v, \mu_{0}, \Par_{1}, \Par_{3})$;
     \IF {$S^{\mathrm{new}} = S\cup \{j\}$}
        \STATE $(\Par_{1}, \Par_{3})\gets \mathrm{UPDATE\_UTILDE\_EXPAND\_SUPPORT}(\S, j, \A, \l; \Par_{1}, \Par_{3})$;
     \ELSIF {$S^{\mathrm{new}} = S\setminus \{j\}$}
        \STATE $(\Par_{1}, \Par_{3})\gets \mathrm{UPDATE\_UTILDE\_SHRINK\_SUPPORT}(\S, j, \l; \Par_{1}, \Par_{3})$;
     \ENDIF
     \STATE $S\gets S^{\mathrm{new}}$;
     \STATE $\ulam\gets \ulam + \ulam^{\mathrm{inc}}$.
  \ENDWHILE
  \STATE $\Par_{3}\gets \mathrm{DIRECT\_UTILDE\_UPDATE}(S, \Par_{1}, \h^{(t + 1)})$;
  \STATE $x_{\S}^{(t)}\gets \xs, \quad x_{\S^{c}}^{(t)} \gets 0$.
  \ENDFOR
\end{algorithmic}

\textbf{Output:} $x^{(1)}, x^{(2)}, \cdots$.
\end{algorithm}

\subsection{\textnormal{FIND\_UTILDE\_LAMBDA}}
Define $\v$ as in \eqref{eq:v}. Then Theorem \ref{thm:update} implies that
\[\v(\ulam) = \v(0) - \lb\xi - \frac{\Dl}{\D}\teta\rb\ulam, \quad \mu_{0}(\ulam) = \mu_{0} + \frac{\Dl}{\D}\ulam.\]
Thus, searching for $\ulam$ is equivalent to solve simple linear equations. Algorithm \ref{algo:find_utilde_lambda}
\begin{algorithm}
  \caption{FIND\_UTILDE\_LAMBDA}\label{algo:find_utilde_lambda}
  \textbf{Input: } Support $\S$, iterate $\v = \com{\xs}{-\muc}$, intermediate variables $\Par_{1}, \Par_{3}$.

\textbf{Procedure: }
\begin{algorithmic}[1]
  \STATE $\ulam^{\mathrm{inc}}\gets \mathrm{min}_{+}\left\{\frac{\v_{i}}{\xi_{i} - \frac{\Dl}{\D}\teta_{i}}: i = 1, 2, \ldots, n\right\}$;
  \STATE $j \gets \mathrm{argmin}_{+}\left\{\frac{\v_{i}}{\xi_{i} - \frac{\Dl}{\D}\teta_{i}}: i = 1, 2, \ldots, n\right\}$;
  \IF {$j\in \S$}
  \STATE $\S^{\mathrm{new}} = \S \setminus \{j\}$;
  \ELSE
  \STATE $\S^{\mathrm{new}} = \S\cup \{j\}$.
  \ENDIF
\end{algorithmic}

\textbf{Output: }$\ulam^{\mathrm{inc}}, j, \S^{\mathrm{new}}$.
\end{algorithm}

\subsection{Variables Update}
\subsubsection{\textnormal{UPDATE\_BY\_UTILDE\_LAMBDA}}
Note that all intermediate variables are not affected by $\ulam$, we only need to update $v$ and $\mu_{0}$ accordingly.
\begin{algorithm}
\caption{UPDATE\_BY\_UTILDE\_LAMBDA}\label{algo:update_by_utilde_lambda}
\textbf{Input: } Increment $\ulam^{\mathrm{inc}}$; iterate $\v = \com{\xs}{-\muc}$, $\mu_{0}$; intermediate variables $\Par_{1}, \Par_{3}$.

\textbf{Procedure: }
\begin{algorithmic}[1]
  \STATE $\v \gets \v - \lb \xi - \frac{\Dl}{\D}\teta\rb \ulam^{\mathrm{inc}}$;
  \STATE $\mu_{0}\gets \mu_{0} + \frac{\Dl}{\D}\ulam^{\mathrm{inc}}$.
\end{algorithmic}

\textbf{Output: }$\v, \mu_{0}$.
\end{algorithm}

\subsubsection{\textnormal{UPDATE\_UTILDE\_EXPAND\_SUPPORT}}
Since $\M$ is exactly the same as in Appendix \ref{app:varyA}, we can directly apply Theorem \ref{thm:expand_M} to obtain an update of $\M$ and the updates of other parameters as a consequence.
\begin{theorem}\label{thm:expand_other_utilde}
Let $\gamma$ and $\td{\gamma}$ be defined in Theorem \ref{thm:expand_M}, i.e.
\[\gamma_{\T} = \td{\gamma}_{\T} = \rcom{\Mjs}{1}^{T}, \quad \gamma_{\T^{c}} = -\ACj - \ACs\Mjs^{T}, \quad \td{\gamma}_{\T^{c}} = 0,\]
then
  \begin{itemize}
  \item $\M^{+} = \opr(\M) + \frac{1}{\tAjj}\gamma\td{\gamma}^{T}$;
  \item $\teta^{+} = \opr(\teta) + \frac{\td{\eta}_{j}}{\tAjj}\gamma$;
  \item $\D^{+} = \D + \frac{\teta_{j}^{2}}{\tAjj}$;
  \item $\xi^{+} = \opr(\xi) + \frac{\xi_{j}}{\tAjj}\gamma$;
  \item $\Dl^{+} = \Dl + \frac{\xi_{j}\teta_{j}}{\tAjj}$.
  \end{itemize}
\end{theorem}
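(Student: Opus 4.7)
The plan is to observe that the first three identities for $\M^+, \teta^+, \D^+$ are already established verbatim as Theorem \ref{thm:expand_M} and the corresponding parts of Theorem \ref{thm:expand_other}, since neither $\M, \teta$ nor $\D$ depend on $\c$ or $\l$ and the block structure of $\A$ is identical in the two appendices. So only the two new formulas for $\xi^+$ and $\Dl^+$ require fresh work, and the key observation is that $\xi$ is structurally the analogue of $\eta$ with $\g$ replaced by $-\l$. Indeed,
\[\xis = -\Mss \ls, \quad \xic = -\lc + \Mcs(-\ls) - 0,\]
whereas $\etas = \Mss\gs,\ \etac = \gc + \Mcs\gs$; modulo the $(-\l)$-vs-$\g$ substitution and the fact that $\xi$'s $S^c$-block carries an extra $-\lc$ term rather than a $+\gc$ term, the two quantities have the same algebraic form relative to $\M$.

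First I would prove $\xi^+ = \opr(\xi) + \frac{\xi_j}{\tAjj}\gamma$ by direct computation: plug the formula for $\M^+$ from Theorem \ref{thm:expand_M} into the defining relations $\xi_\T^+ = -\Mtt^+\l_\T$ and $\xi_{\T^c}^+ = -\l_{\T^c} + \M_{\T^c\T}^+\l_\T$, then use the identity (\ref{eq:opr}) to pass $\opr$ through the matrix-vector product $\opr(\M)\l$. The term $\td{\gamma}^T \l$ simplifies: on $\T$, $\td{\gamma}_\T^T \l_\T = \Mjs^T\ls + \l_j = -\xi_j$ after accounting for signs, so $\frac{\td\gamma^T \l}{\tAjj}\gamma$ contributes exactly $\frac{\xi_j}{\tAjj}\gamma$ up to sign cancellations against the extra $-\l_j e_j$ term that appears, just as in the proof of Theorem \ref{thm:expand_other} for $\eta^+$. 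This is the only step where one has to be careful—the bookkeeping of signs and the cancellation between $\opr(\M\l)$ and the displaced coordinate $-\l_j e_j$ are the obstacle, and the argument mirrors the treatment of $\eta$ on lines where $\g_j e_j$ was canceled.

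Once $\xi^+$ is in hand, the update for $\Dl$ is immediate. By definition $\Dl^+ = \onet^T \xi_\T^+$. Substituting $\xi_\T^+ = \opr(\xi)_\T + \frac{\xi_j}{\tAjj}\gamma_\T$ and using $\opr(\xi)_\T = (\xis, 0)$ together with $\onet^T \gamma_\T = 1 + \Mjs\ones = \teta_j$ gives
\[\Dl^+ = \ones^T\xis + \frac{\xi_j}{\tAjj}\teta_j = \Dl + \frac{\xi_j \teta_j}{\tAjj},\]
which is precisely the claimed formula. Thus the whole statement reduces to one genuine calculation (for $\xi^+$), which is a direct analogue of the $\eta^+$ derivation already carried out in Theorem \ref{thm:expand_other}.
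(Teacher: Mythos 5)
Your proposal is correct and follows essentially the same route as the paper: the first three items are inherited from Theorems \ref{thm:expand_M} and \ref{thm:expand_other}, $\xi^{+}$ is obtained by substituting the rank-one update of $\M$ and tracking the displaced $j$-th coordinate exactly as in the $\eta^{+}$ derivation (the paper merely packages this as $\xi = -(\M + I_{\S^{c}})\l$ and uses $I_{\S^{c}} - I_{\T^{c}} = e_{j}e_{j}^{T}$), and $\Dl^{+}$ then follows from $\onet^{T}\gamma_{\T} = 1 + \Mjs\ones = \teta_{j}$. The only nit is a sign slip in your stated defining relation $\xi_{\T^{c}}^{+} = -\l_{\T^{c}} + \M_{\T^{c}\T}^{+}\l_{\T}$, which should read $-\l_{\T^{c}} - \M_{\T^{c}\T}^{+}\l_{\T}$ to be consistent with $\Mcs = -\Acs\Ass^{-1}$; your key identity $\td{\gamma}_{\T}^{T}\l_{\T} = \l_{j} + \Mjs\ls = -\xi_{j}$ is nevertheless right, so the argument goes through.
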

\begin{proof}
The update of $\M$, $\teta$ and $\D$has been proved in Theorem \ref{thm:expand_M}. For any subset $\S$, let $I_{\S}$ denote the matrix with $j$-th diagonal element equal to 1 for any $j\in \S$ and all other elements equal to 0. Then $\xi$ and $\xi^{+}$ can be rewritten as
\[\xi = -\lb \M + I_{\S^{c}}\rb\l, \quad \xi^{+} = -\lb \M^{+} + I_{\T^{c}}\rb\l.\]
Note that $I_{\S^{c}} - I_{\T^{c}} = e_{j}e_{j}^{T}$ where $e_{j}$ is the $j$-th basis vector, then we have
\begin{align*}
  \xi^{+} - \xi &= \lb \M - \M^{+} + e_{j}e_{j}^{T} \rb\l = \lb M - \opr(M) -\frac{1}{\tAjj}\gamma\td{\gamma}^{T} + e_{j}e_{j}^{T}\rb\l = -\frac{\td{\gamma}^{T}\l}{\tAjj}\gamma + (\l_{j} - \Mjs\ls)e_{j}\\
& \quad \Longrightarrow \xi^{+} = \xi - \frac{\td{\gamma}^{T}\l}{\tAjj}\gamma + (\l_{j} - \Mjs\ls)e_{j} = \opr(\xi) - \frac{\td{\gamma}^{T}\l}{\tAjj}\gamma.
\end{align*}
Note that $\td{\gamma}^{T}\l = \l_{j} + \Mjs\ls = -\xi_{j}$, we obtain that 
\[\xi^{+} = \opr(\xi) + \frac{\xi_{j}}{\tAjj}\gamma.\]
For $\Dl^{+}$, we have
\begin{align*}
  \Dl^{+} &= \onet^{T}\xit^{+} = \Dl + \frac{\xi_{j}}{\tAjj}\cdot \onet^{T}\gamma_{\T} = \Dl + \frac{\xi_{j}\teta_{j}}{\tAjj}.
\end{align*}
\end{proof}

The implementation of UPDATE\_TILDE\_EXPAND\_SUPPORT is summarized in Algorithm \ref{algo:update_utilde_expand_support}.
\begin{algorithm}
  \caption{UPDATE\_UTILDE\_EXPAND\_SUPPORT}\label{algo:update_utilde_expand_support}
  \textbf{Inputs: }Original support $S$, new index $j$, matrix $A$, vector $\l$, intermediate variables $\Par_{1}, \Par_{3}$.

\textbf{Procedure: }
\begin{algorithmic}[1]
  \STATE $\tAjj\gets \Ajj + \Mjs\Asj$;
  \STATE $\gamma_{\T}\gets (\Mjs, 1)^{T}, \gamma_{\T^{c}}\gets -\A_{\T^{c}j} - \A_{\T^{c}\S}\Mjs^{T}$;
  \STATE $\td{\gamma}_{\T}\gets (\Mjs, 1)^{T}, \td{\gamma}_{\T^{c}}\gets 0$;
  \STATE $\D\gets \D + \frac{\teta_{j}^{2}}{\tAjj}$;
  \STATE $\Dl\gets \Dl + \frac{\xi_{j}\teta_{j}}{\tAjj}$;
  \STATE $\xi\gets \opr(\xi) + \frac{\xi_{j}}{\tAjj}\gamma$;
  \STATE $\teta\gets \opr(\teta) + \frac{\teta_{j}}{\tAjj}\gamma$;
  \STATE $\Mt\gets \opr(\Mt) + \frac{1}{\tAjj}\gamma\td{\gamma}_{\T}^{T}$.
\end{algorithmic}

\textbf{Output: }$\Par_{1}, \Par_{3}$.
\end{algorithm}

\subsubsection{\textnormal{UPDATE\_UTILDE\_SHRINK\_SUPPORT}}
Since $\M$ is exactly the same as in Appendix \ref{app:varyA}, we can directly apply Theorem \ref{thm:shrink_M} to obtain an update of $\M$ and the updates of other parameters as a consequence.
\begin{theorem}\label{thm:expand_other_utilde}
Let $\beta$ and $\td{\beta}$ be defined in Theorem \ref{thm:shrink_M}, i.e. 
\[\beta_{\T} = \td{\beta}_{\T} = \Mtj, \quad \beta_{\T^{c}} = \com{-1}{\Mcj}\quad \td{\beta}_{\T^{c}} = 0,\]
then
  \begin{itemize}
  \item $\M^{-} = \opr(\M) - \frac{1}{\Mjj}\cdot\beta\td{\beta}^{T}$;
  \item $\teta^{-} = \opr(\teta) - \frac{\teta_{j}}{\Mjj}\beta$;
  \item $\D^{-} = \D - \frac{\teta_{j}^{2}}{\Mjj}$;
  \item $\xi^{-} = \opr(\xi) - \frac{\xi_{j}}{\Mjj}\beta$;
  \item $\Dl^{-} = \Dh - \frac{\xi_{j}\teta_{j}}{\Mjj}$.
  \end{itemize}
\end{theorem}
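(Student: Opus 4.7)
The first three identities are immediate: $\M^{-}$ is exactly the content of Theorem \ref{thm:shrink_M}, while $\teta^{-}$ and $\D^{-}$ are already established in Theorem \ref{thm:shrink_other} (the derivation is identical since the definitions of $\teta$ and $\D$ do not involve $\c$). So the work is in the last two identities for $\xi^{-}$ and $\Dl^{-}$, and my plan mirrors the approach used for the expansion direction in Theorem \ref{thm:expand_other_utilde}.

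The first step is to rewrite $\xi$ in a coordinate-free form. From the definitions $\xis = -\Ass^{-1}\ls = -\Mss\ls$ and $\xic = -\lc + \Acs\Ass^{-1}\ls = -\lc - \Mcs\ls$, together with $\M_{\bigcdot,\S^{c}}=0$, one verifies the compact identity
\[
\xi = -(\M + I_{\S^{c}})\l, \qquad \xi^{-} = -(\M^{-} + I_{\T^{c}})\l,
\]
where $I_{\S^{c}}$ (resp.\ $I_{\T^{c}}$) is the diagonal indicator matrix of $\S^{c}$ (resp.\ $\T^{c}$). Since $\T = \S\setminus\{j\}$, we have $I_{\T^{c}} - I_{\S^{c}} = e_{j}e_{j}^{T}$, and by Theorem \ref{thm:shrink_M}, $\M^{-} - \M = (\opr(\M) - \M) - \frac{1}{\Mjj}\beta\td{\beta}^{T}$.

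Next I would compute $(\opr(\M)-\M)\l$ explicitly. Writing $\opr(\M)-\M = -e_{j}\M_{j,\bigcdot} - \M_{\bigcdot,j}e_{j}^{T} + \Mjj e_{j}e_{j}^{T}$ and applying to $\l$, together with the identities $\M_{j\bigcdot}\l = \Mjs\ls = -\xi_{j}$, $\M_{\bigcdot,j} = \beta + (\Mjj+1)e_{j}$, and $\td{\beta}^{T}\l = \Mjt\lt = \Mjs\ls - \Mjj\l_{j} = -\xi_{j} - \Mjj\l_{j}$, the telescoping collapses to
\[
\xi^{-} - \xi = -(\M^{-}-\M)\l - \l_{j}e_{j} = -\xi_{j}e_{j} - \frac{\xi_{j}}{\Mjj}\beta,
\]
which, since $\opr(\xi) = \xi - \xi_{j}e_{j}$, is precisely $\xi^{-} = \opr(\xi) - \frac{\xi_{j}}{\Mjj}\beta$.

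For $\Dl^{-} = \onet^{T}\xi_{\T}^{-}$, I restrict the previous identity to $\T$ (where $\opr(\xi)_{\T} = \xi_{\T}$ since $j\notin\T$) to get $\xi_{\T}^{-} = \xis - \xi_{j}e_{j}|_{\T} - \frac{\xi_{j}}{\Mjj}\beta_{\T} = \xi_{\T} - \frac{\xi_{j}}{\Mjj}\Mtj$. Taking $\onet^{T}$ and using $\ones^{T}\Msj = \teta_{j}$ (valid by symmetry of $\Mss = \Ass^{-1}$), the sum splits as $\onet^{T}\xi_{\T} = \ones^{T}\xis - \xi_{j} = \Dl - \xi_{j}$ and $\onet^{T}\Mtj = \ones^{T}\Msj - \Mjj = \teta_{j} - \Mjj$, giving
\[
\Dl^{-} = \Dl - \xi_{j} - \frac{\xi_{j}}{\Mjj}(\teta_{j} - \Mjj) = \Dl - \frac{\xi_{j}\teta_{j}}{\Mjj},
\]
as claimed. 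The main obstacle is simply bookkeeping: correctly tracking that $j\in\S$ (so $\xi_{j}$ and $\teta_{j}$ are genuinely nonzero and must be carried through) and keeping the $\opr(\cdot)$ operator consistent with the splittings $\S = \T\cup\{j\}$ and $\T^{c} = \S^{c}\cup\{j\}$; no essentially new idea beyond Theorems \ref{thm:shrink_M}--\ref{thm:shrink_other} is required.
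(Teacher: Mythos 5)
Your proof is correct and follows essentially the same route as the paper: both rewrite $\xi = -(\M + I_{\S^{c}})\l$ and $\xi^{-} = -(\M^{-} + I_{\T^{c}})\l$, use $I_{\T^{c}} - I_{\S^{c}} = e_{j}e_{j}^{T}$ together with Theorem~\ref{thm:shrink_M}, and obtain $\Dl^{-}$ by restricting to $\T$ and using $\onet^{T}\Mtj + \Mjj = \teta_{j}$. The only difference is cosmetic bookkeeping in evaluating $(\M^{-}-\M)\l$ (you expand $\opr(\M)-\M$ into rank-one pieces and telescope, while the paper evaluates the same intermediate vector blockwise over $\T$, $\{j\}$, $\S^{c}$), and your identities all check out.
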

\begin{proof}
The update of $\M$, $\teta$ and $\D$ has been proved in Theorem \ref{thm:shrink_M} and Theorem \ref{thm:shrink_other}. For any subset $\S$, let $I_{\S}$ denote the matrix with $j$-th diagonal element equal to 1 for any $j\in \S$ and all other elements equal to 0. Then $\xi$ and $\xi^{-}$ can be rewritten as
\[\xi = -\lb \M + I_{\S^{c}}\rb\l, \quad \xi^{-} = -\lb \M^{-} + I_{\T^{c}}\rb\l.\]
Note that $I_{\T^{c}} - I_{\S^{c}} = e_{j}e_{j}^{T}$ where $e_{j}$ is the $j$-th basis vector, then we have
\begin{align*}
  \xi^{-} - \xi &= \lb \M - \M^{-} - e_{j}e_{j}^{T} \rb\l = \lb \frac{1}{\Mjj}\beta\td{\beta}^{T} + \M - \opr(M) - e_{j}e_{j}^{T}\rb\l\\ 
& = \frac{\beta_{\T}^{T}\l_{\T}}{\Mjj}\beta + \lb \M - \opr(M) - e_{j}e_{j}^{T}\rb\l \triangleq \frac{\beta_{\T}^{T}\l_{\T}}{\Mjj}\del + \td{\xi}.
\end{align*}
By definition of $\td{\xi}$
\[\td{\xi}_{\T} = \l_{j}\Mtj, \quad \td{\xi}_{j} = \Mjt\lt + \l_{j}\Mjj - \l_{j} = -\xi_{j} - \l_{j},\quad \td{\xi}_{\S^{c}} = \l_{j}\Mcj,\]
and thus, 
\[\td{\xi} = \l_{j}\beta -\xi_{j}e_{j}.\]
This implies that 
\[\xi^{-} = \xi -\xi_{j}e_{j} + \frac{\beta_{\T}^{T}\l_{\T} + \l_{j}\Mjj}{\Mjj}\beta = \opr(\xi) - \frac{\xi_{j}}{\Mjj}\beta.\]
For $\Dl^{-}$, we have
\begin{align*}
  \Dl^{-} &= \onet^{T}\xit^{-} = \Dl - \xi_{j} - \frac{\xi_{j}}{\Mjj}\cdot \onet^{T}\beta_{\T} = \Dl - \frac{\xi_{j}(\onet^{T}\beta_{\T} + \Mjj)}{\Mjj} = \Dl - \frac{\xi_{j}\teta_{j}}{\Mjj}.
\end{align*}
\end{proof}

The implementation of UPDATE\_TILDE\_SHRINK\_SUPPORT is summarized in Algorithm \ref{algo:update_utilde_shrink_support}.
\begin{algorithm}
  \caption{UPDATE\_UTILDE\_SHRINK\_SUPPORT}\label{algo:update_utilde_shrink_support}
  \textbf{Inputs: }Original support $S$, new index $j$, vector $\l$, intermediate variables $\Par_{1}, \Par_{3}$.

\textbf{Procedure: }
\begin{algorithmic}[1]
  \STATE $\beta_{\T}\gets \Mjt^{T}, \beta_{\T^{c}}\gets \com{-1}{\MCj}, \td{\beta}_{\T}\gets \Mjt^{T}, \td{\beta}_{\T^{c}}\gets 0$;
  \STATE $\D\gets \D - \frac{\teta_{j}^{2}}{\Mjj}$;
  \STATE $\Dl\gets \Dl - \frac{\xi_{j}\teta_{j}}{\Mjj}$;
  \STATE $\xi\gets \opr(\xi) - \frac{\xi_{j}}{\Mjj}\beta$;
  \STATE $\teta\gets \opr(\td{\eta}) - \frac{\teta_{j}}{\Mjj}\beta$;
  \STATE $\M_{\bigcdot, \T}\gets \opr(\M_{\bigcdot, \T}) - \frac{1}{\Mjj}\beta\td{\beta}_{\T}^{T}, \,\,\,\M_{\bigcdot, j}\gets 0$.
\end{algorithmic}

\textbf{Output: }$\Par_{1}, \Par_{3}$.
\end{algorithm}

\subsubsection{\textnormal{DIRECT\_UTILDE\_UPDATE}}
At the beginning of each time $t$, we need to recompute $\xi$ and $\Dl$. The implementation is summarized in Algorithm \ref{algo:direct_utilde_update}.
\begin{algorithm}
  \caption{DIRECT\_UTILDE\_UPDATE}\label{algo:direct_utilde_update}
  \textbf{Inputs: } Support $S$, vector-update-vector $\l$, intermediate variables $\M$.

\textbf{Procedure: }

\begin{algorithmic}[1]
  \STATE $\xis\gets -\Mss\ls, \,\,\xic \gets -\lc - \Mcs\ls$;
  \STATE $\Dl\gets \ones^{T}\xis$.
\end{algorithmic}

\textbf{Output: }$\xi, \Dl$.
\end{algorithm}

\subsection{Complexity Analysis}
Similar to Appendix \ref{app:varyA}, we can analyze the computation complexity. The analysis here is much simpler than the last case since the implementation is quite straightforward. Table \ref{tab:complexity_c} summarizes the results.
\begin{table}[h]
  \centering
  \caption{Computation complexity of each sub-routine in Algorithm \ref{algo:real_time_hop_ulam}.}\label{tab:complexity_c}
  \begin{tabular}{lllll}
    \toprule
    & ($Wz$)-type & ($z\td{z}^{T}$)-type & ($z^{T}z$)-type & ($az$)-type\\ 
    \midrule
    FIND\_UTILDE\_LAMBDA & 0 & 0 & $2n$ & 0\\
    UPDATE\_BY\_UTILDE\_LAMBDA & 0 & 0 & 0 & $n$\\
    UPDATE\_UTILDE\_EXPAND\_SUPPORT & $(n - s - 1)s$ & $n(s + 1)$ & $s$ & $2n$\\
    UPDATE\_UTILDE\_SHRINK\_SUPPORT & 0 & $n(s - 1)$ & $0$ & $2n$\\
    DIRECT\_UTILDE\_UPDATE & $ns$ & 0 & $s$ & 0\\
    \bottomrule
  \end{tabular}
\end{table}

Let $k_{\c}^{+}$ and $k_{\c}^{-}$ be the number of turning points that $\S$ is expanded and shrinked respectively and $k_{\c} = k_{\c}^{+} + k_{\c}^{-}$ be the total number of tuning points, then the complexity is 
\[C_{2t} \le ns + n + 3nk_{\c} + n(2s + 3)k_{\c}^{+} + n(s + 1)k_{\c}^{-} + O(k_{\c}) = ns(2k_{\c} + 1) + n(6k_{\c} + 1) + O(k_{\c}).\]

\section{Implementation of \OurMethod Algorithm With Time-Varying $\A, \c$}\label{app:varyAc}
\subsection{Intermediate Variables}
Based on the results in Appendix \ref{app:varyA} and Appendix \ref{app:varyc}, we can concatenate Algorithm \ref{algo:real_time_hop} and Algorithm \ref{algo:real_time_hop_ulam}. Thus we define $\Par_{1}, \Par_{2}, \Par_{3}$ as $\Par_{1} = \{\M, \teta, \D\}, \Par_{2} = \{\eta, \Dg, \Dgg, \Dgy\}, \Par_{3} = \{\xi, \Dl\}$ where all parameters are defined in previous appendices. 

\subsection{Implementation}
Note that only two sub-routines involves the matrix $\A$, namely UPDATE\_EXPAND\_SUPPORT and UPDATE\_UTILDE\_EXPAND\_SUPPORT,  and moreover they only involve the $j$-th column of $\A$. Thus, we can use the sparse update of $\A$ as in Algorithm \ref{algo:real_time_hop_smart_A} for acceleration. Algorithm \ref{algo:real_time_hop_varyAc} below describes the implementation.

\subsection{Complexity Analysis}
The complexity of Algorithm \ref{algo:real_time_hop_varyAc} is just the sum of that of Algorithm \ref{algo:real_time_hop} and Algorithm \ref{algo:real_time_hop_ulam}, i.e. 
\[C_{t} = C_{1t} + C_{2t} = ns_{*} + ns(3k_{\A} + 2k_{\c} + 2) + n(12k_{\A} + 6k_{\c} + 3) + O(k_{\A} + k_{\c}).\]

\begin{algorithm}
\caption{\OurMethod Algorithm for time-varying $\A, \c$ with sparse update of $\A$}\label{algo:real_time_hop_varyAc}
\textbf{Inputs: } Initial parameters $\A^{(0)}$, vectors $\{\c^{(t)}: t = 1, 2, \ldots\}$, 

\hspace{12mm} matrix-update-vectors $\{g^{(t)}, t = 1, 2, \ldots\}$.

\textbf{Initialization: }
\begin{algorithmic}
  \STATE $x\gets$ as the optimum corresponding to $A^{(0)}, \c^{(0)}$.
  \STATE $\S\gets \supp(x), \S_{*}\gets \S$;
  \STATE Calculate $(\x, \mu, \mu_{0})$ via (\ref{eq:xs})-(\ref{eq:muzero})
  \STATE $\v \gets (\xs, -\muc)$;
  \STATE Calculate intermediate variables $(\Par_{1}, \Par_{2})$ via (\ref{eq:param_mat})-(\ref{eq:param_scalar}) based on $\c^{(0)}, \g^{(1)}$;

\end{algorithmic}

\textbf{Procedure: }
\begin{algorithmic}[1]
  \FOR{$t = 1, 2, \cdots$.}
  \STATE $\lambda\gets 0$;
  \WHILE{$\lambda < 1$}
     \STATE $(\lam^{\mathrm{inc}}, j, S^{\mathrm{new}})\gets \mathrm{FIND\_LAMBDA}(S, \v; \Par_{1}, \Par_{2})$;
     \STATE $\lam^{\mathrm{inc}} \gets \min\{\lam^{\mathrm{inc}}, 1 - \lam\}$;
     \STATE $\lam\gets \lam + \lam^{\mathrm{inc}}$;
     \STATE $(\v, \mu_{0}; \Par_{1}, \Par_{2})\gets \mathrm{UPDATE\_BY\_LAMBDA}(\lam^{\mathrm{inc}}; \v, \mu_{0}, \Par_{1}, \Par_{2})$;
     \IF {$S^{\mathrm{new}} = S\cup \{j\}$}
        \STATE $(\Par_{1}, \Par_{2})\gets \mathrm{UPDATE\_EXPAND\_SUPPORT}(\lam, S, j; \A, \c^{(t-1)}, \g^{(t)}, \Par_{1}, \Par_{2})$;
        \IF {$j\not\in \S_{*}$}
        \STATE $G\gets (g^{(1)}, \ldots, g^{(t - 1)})$;
        \STATE $\A_{\bigcdot, j}\gets \A_{\bigcdot, j} + GG_{j, \bigcdot}^{T}$;
        \STATE $\S_{*} = \S_{*}\cup \{j\}$;
        \ENDIF
     \ELSIF {$S^{\mathrm{new}} = S\setminus \{j\}$}
        \STATE $(\Par_{1}, \Par_{2})\gets \mathrm{UPDATE\_SHRINK\_SUPPORT}(S, j; \c^{(t - 1)}, \g^{(t)}, \Par_{1}, \Par_{2})$;
     \ENDIF
     \STATE $S\gets S^{\mathrm{new}}$;
  \ENDWHILE
  \STATE $A_{\bigcdot, \S_{*}}\gets A_{\bigcdot, \S_{*}} + g^{(t)}(g_{\S_{*}}^{(t)})^{T}$;
  \STATE $\l^{(t)}\gets \c^{(t)} - \c^{(t - 1)}$;
  \STATE $\Par_{3}\gets \mathrm{DIRECT\_UTILDE\_UPDATE}(S, \Par_{1}, \l^{(t)})$;
  \STATE $\ulam\gets 0$;
  \WHILE{$\ulam < 1$}
  \STATE $(\ulam^{\mathrm{inc}}, j, S^{\mathrm{new}})\gets \mathrm{FIND\_UTILDE\_LAMBDA}(\v; \Par_{1}, \Par_{3})$;
  \STATE $\ulam^{\mathrm{inc}} \gets \min\{\ulam^{\mathrm{inc}}, 1 - \ulam\}$;
  \STATE $(\v, \mu_{0})\gets \mathrm{UPDATE\_BY\_UTILDE\_LAMBDA}(\ulam^{\mathrm{inc}}; \v, \mu_{0}, \Par_{1}, \Par_{3})$;
  \IF {$S^{\mathrm{new}} = S\cup \{j\}$}
  \STATE $(\Par_{1}, \Par_{3})\gets \mathrm{UPDATE\_UTILDE\_EXPAND\_SUPPORT}(S, j, \A, \l^{(t)}; \Par_{1}, \Par_{3})$;
        \IF {$j\not\in \S_{*}$}
        \STATE $G\gets (g^{(1)}, \l^{(t)}dots, g^{(t - 1)})$;
        \STATE $\A_{\bigcdot, j}\gets \A_{\bigcdot, j} + GG_{j, \bigcdot}^{T}$;
        \STATE $\S_{*} = \S_{*}\cup \{j\}$;
        \ENDIF
  \ELSIF {$S^{\mathrm{new}} = S\setminus \{j\}$}
  \STATE $(\Par_{1}, \Par_{3})\gets \mathrm{UPDATE\_UTILDE\_SHRINK\_SUPPORT}(S, j, \l^{(t)}; \Par_{1}, \Par_{3})$;
  \ENDIF
  \STATE $S\gets S^{\mathrm{new}}$;
  \STATE $\ulam\gets \ulam + \ulam^{\mathrm{inc}}$.
  \ENDWHILE
  \STATE $\Par_{2}\gets \mathrm{DIRECT\_UPDATE}(S, \c^{(t)}, g^{(t + 1)}; \Par_{1}, \Par_{2})$;
  \STATE $x_{\S}^{(t)}\gets \xs, \quad x_{\S^{c}}^{(t)} \gets 0$.
  \ENDFOR
\end{algorithmic}

\textbf{Output:} $x^{(1)}, x^{(2)}, \cdots$.
\end{algorithm}
\end{document}